\documentclass{article}
\usepackage{graphicx}
\usepackage[T1]{fontenc}
\usepackage{adjustbox}
\usepackage{comment}
\usepackage{amsmath}
\usepackage{amsthm}
\usepackage{amssymb}
\usepackage{adjustbox}
\usepackage{diagbox}
\usepackage{mathtools}
\usepackage{tikz, pgfplots}
\usetikzlibrary{positioning}
\usepackage{rotating}
\usepackage{lscape}
\usepackage{multicol}
\usepackage{multirow}
\usepackage[thinlines]{easytable}
\usepackage{cite}
\usepackage{multicol}

\newtheorem{theorem}{Theorem}

\newtheorem{lemma}[theorem]{Lemma}

\theoremstyle{definition}
\newtheorem{definition}{Definition}


\title{Computing the Haar state of $\mathcal{O}(SL_q(3))$ using value preserving (anti)homomorphisms}

\author{Ting Lu}

\date{}

\begin{document}

\maketitle
\begin{abstract}
In this paper, we introduce two (anti)homomorphisms that preserve the Haar state values of monomials. Together with the modular automorphism, the three (anti)homomorphisms are used in our new algorithm to compute the Haar states of monomials on $\mathcal{O}(SL_q(3))$. Comparing with the algorithm proposed in the author's previous work~\cite{lu2023}, the new algorithm reduces the linear relations used in the computation to a half.
\end{abstract}
\section{Introduction}
The Drinfeld-Jimbo~\cite{drinfeld1986quantum}~\cite{jimbo1985aq} quantum group $\mathcal{O}(SL_q(n))$ is a Hopf algebra introduced as a deformation of the algebra of coordinate functions on the Lie group $SL(n)$. The deformation relies on a parameter denoted as $q\in \mathbb{C}, 0<|q|<1$, in this paper and when $q\rightarrow 1$, the limiting Hopf algebra $\mathcal{O}(SL_1(n))$ is the algebra of coordinate functions on the Lie group $SL(n)$. It is possible to define a $^*$-Hopf algebra structure on $\mathcal{O}(SL_q(n))$ and the $^*$-Hopf algebra is denoted as $\mathcal{O}(SU_q(n))$~\cite{klimyk2012quantum}, a $q$-deformation of the algebra of coordinate functions on the Lie group $SU(n)$. Similar to the case of $\mathcal{O}(SL_1(n))$, $\mathcal{O}(SU_1(n))$ is the algebra of coordinate functions on the Lie group $SU(n)$. Moreover, the Haar state on $\mathcal{O}(SU_1(n))$ turns out to be the normalized integral with respect to the Haar measure on $SU(n)$. In this sense, the Haar state on $\mathcal{O}(SU_q(n))$ can be considered as the deformed Haar measure on $SU(n)$ and it is natural to seek for deformed versions of classical numerical results regarding the Haar measure on $SU(n)$.  Since the the Haar state of $\mathcal{O}(SU_q(n))$ can be considered as the Haar state on $\mathcal{O}(SL_q(n))$ extended to $\mathcal{O}(SU_q(n))$ by the $^*$ operation, this paper will study the computation of the Haar state on $\mathcal{O}(SL_q(n))$. As most of the interesting numerical results on $SU(n)$ are evaluating integrals of certain monomials (see Collins~\cite{collins2003moments} for example), we will compute the Haar state of $\mathcal{O}(SL_q(n))$ on a monomial basis.\\
\\
The existence and uniqueness of the Haar state on $\mathcal{O}(SL_q(n))$ relies on the fact that the quantum group is a co-semisimple Hopf algebra. The study of Haar state on co-semisimple Hopf algebra dates back to 1969 when Sweedler~\cite{sweedler1969hopf} showed that there is a unique "Haar state" up to normalization on any co-semisimple Hopf algebra. More precisely, Sweedler gives the Peter-Weyl decomposition of  a co-semisimple Hopf algebra and set the Haar state of the vector basis of the irreducible co-module, matrix coefficients, as zero for all matrix coefficients except the multiplicative unit $1$ which equals to $1$. However, the explicit expressions of matrix coefficients (in term of generators) are unknown except for the case $n=2$~\cite{klimyk2012quantum}. As a consequence, computing the Haar states of monomials using matrix coefficients is not an efficient method.\\
\\
In this paper, we develop an algorithm to compute the Haar state of these monomials using the following three (anti)homomorphisms:
\begin{definition}
    On $\mathcal{O}(SL_q(n))$, the\textbf{ Modular Automorphism}~\cite{noumi1993finite} $\eta$ is defined as:
    $$\eta(x_{i,j})=q^{2i+2j-2n-2}\cdot x_{i,j}$$
    where $x_{i,j}$'s are the generators of $\mathcal{O}(SL_q(n))$.
\end{definition}
\begin{definition}
    On $\mathcal{O}(SL_q(n))$, the \textbf{Diagonal Flip Homomorphism} $\gamma$ is defined as:
    $$\gamma(x_{i,j})=x_{j,i}.$$
\end{definition}
\begin{definition}
    On $\mathcal{O}(SL_q(n))$, the \textbf{Double Flip Anti-homomorphism} $\omega$ is defined as:
    $$\omega(x_{i,j})=x_{n+1-i,n+1-j}.$$
\end{definition}
\noindent We will prove the following theorems which are the key to reduce the computational load:
\begin{theorem}
    Define $\hat{h}=h\circ \gamma$. Then, $\hat{h}=h$ on $\mathcal{O}(SL_q(n))$.
\end{theorem}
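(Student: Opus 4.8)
The plan is to exploit the uniqueness of the Haar state on the co-semisimple Hopf algebra $\mathcal{O}(SL_q(n))$, recalled in the introduction: on such an algebra the normalized right-invariant functional---the $h$ with $h(1)=1$ and $(\mathrm{id}\otimes h)\Delta(a)=h(a)1$ for all $a$---is unique (and it automatically coincides with the two-sided invariant Haar state). Accordingly I would show that $\hat{h}=h\circ\gamma$ is again a normalized right-invariant functional, whence $\hat{h}=h$ follows at once.

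First I would record the structural properties of $\gamma$ that drive the argument. Since $\gamma$ is an algebra homomorphism with $\gamma(x_{i,j})=x_{j,i}$, it is an involution, $\gamma^2=\mathrm{id}$, and satisfies $\gamma(1)=1$; in particular $\hat{h}(1)=h(\gamma(1))=h(1)=1$. The essential point is that $\gamma$ \emph{reverses} the coproduct: using $\Delta(x_{i,j})=\sum_k x_{i,k}\otimes x_{k,j}$ one checks on generators that $\Delta(\gamma(x_{i,j}))=\sum_k x_{j,k}\otimes x_{k,i}=(\gamma\otimes\gamma)\Delta^{\mathrm{op}}(x_{i,j})$, where $\Delta^{\mathrm{op}}=\tau\circ\Delta$. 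As both $\Delta\circ\gamma$ and $(\gamma\otimes\gamma)\circ\Delta^{\mathrm{op}}$ are algebra homomorphisms agreeing on generators, the identity $\Delta\circ\gamma=(\gamma\otimes\gamma)\circ\Delta^{\mathrm{op}}$ holds on all of $\mathcal{O}(SL_q(n))$. In Sweedler notation, with $\Delta(b)=\sum b_{(1)}\otimes b_{(2)}$, this reads $\Delta(\gamma(b))=\sum\gamma(b_{(2)})\otimes\gamma(b_{(1)})$.

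With this in hand the right-invariance of $\hat{h}$ reduces to the left-invariance of $h$. Writing an arbitrary element as $\gamma(b)$ (legitimate since $\gamma$ is bijective) I would compute
\begin{align*}
(\mathrm{id}\otimes\hat{h})\Delta(\gamma(b)) &= \sum \gamma(b_{(2)})\,\hat{h}(\gamma(b_{(1)})) = \sum \gamma(b_{(2)})\,h(b_{(1)}) \\
&= \gamma\Big(\sum h(b_{(1)})\,b_{(2)}\Big) = \gamma\big((h\otimes\mathrm{id})\Delta(b)\big) = \gamma(h(b)1) = h(b)1,
\end{align*}
where the fourth equality uses the left-invariance of the Haar state and the last uses $\gamma(1)=1$. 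Since $h(b)=h(\gamma(\gamma(b)))=\hat{h}(\gamma(b))$, the displayed chain is precisely $(\mathrm{id}\otimes\hat{h})\Delta(\gamma(b))=\hat{h}(\gamma(b))1$, so $\hat{h}$ is right-invariant (and, by the symmetric computation, left-invariant). Together with $\hat{h}(1)=1$, uniqueness of the Haar state gives $\hat{h}=h$.

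The main obstacle I anticipate is the verification that $\gamma$ intertwines $\Delta$ and $\Delta^{\mathrm{op}}$: one must confirm that the transpose map is compatible with the FRT relations defining $\mathcal{O}(SL_q(n))$, so that $\gamma$ is indeed a well-defined algebra homomorphism, and then establish the coproduct reversal on the generators. Once this anti-coalgebra property is secured, the remainder is a short formal manipulation; the only other point to keep straight is that uniqueness is applied to right-invariant (rather than left-invariant) functionals, matching the side on which the computation naturally closes.
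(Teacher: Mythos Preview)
Your proposal is correct and follows essentially the same route as the paper: establish the anti-coalgebra identity $\Delta\circ\gamma=(\gamma\otimes\gamma)\circ\Delta^{\mathrm{op}}$ (the paper writes it as $\Delta\circ\gamma=\tau\circ(\gamma\otimes\gamma)\circ\Delta$), use left-invariance of $h$ together with $\gamma(1)=1$ to deduce right-invariance of $\hat{h}$, and conclude by uniqueness of the normalized Haar state. The only cosmetic difference is that the paper applies $\gamma$ at the end of the computation whereas you pre-substitute $a=\gamma(b)$, but the content is identical.
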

\begin{theorem}
    Define $\Tilde{h}=h\circ \omega$. Then, $\Tilde{h}=h$ on $\mathcal{O}(SL_q(n))$.
\end{theorem}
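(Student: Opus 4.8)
The plan is to invoke the uniqueness of the Haar state. Recall that $h$ is the unique linear functional on $\mathcal{O}(SL_q(n))$ satisfying $h(1)=1$ together with the left and right invariance identities $(h\otimes\mathrm{id})\Delta(a)=h(a)1=(\mathrm{id}\otimes h)\Delta(a)$ for all $a$. Hence it suffices to verify that $\tilde{h}=h\circ\omega$ is normalized and satisfies both invariance identities; uniqueness then forces $\tilde{h}=h$. Normalization is immediate: since $\omega$ is a unital anti-homomorphism, $\tilde{h}(1)=h(\omega(1))=h(1)=1$.

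The key step will be to establish the intertwining relation $\Delta\circ\omega=(\omega\otimes\omega)\circ\Delta$. On generators this is a direct computation: using the matrix coproduct $\Delta(x_{i,j})=\sum_k x_{i,k}\otimes x_{k,j}$ and reindexing the summation by $k\mapsto n+1-k$, one finds $\Delta(\omega(x_{i,j}))=\sum_k x_{n+1-i,n+1-k}\otimes x_{n+1-k,n+1-j}=\sum_k\omega(x_{i,k})\otimes\omega(x_{k,j})=(\omega\otimes\omega)\Delta(x_{i,j})$. To extend this from generators to all of $\mathcal{O}(SL_q(n))$ I would argue by induction on the length of a monomial. The subtle point is that $\omega$ is an anti-homomorphism, so $\omega\otimes\omega$ is likewise an anti-homomorphism for the componentwise product on $\mathcal{O}(SL_q(n))\otimes\mathcal{O}(SL_q(n))$; since $\Delta$ is a homomorphism, both $\Delta\circ\omega$ and $(\omega\otimes\omega)\circ\Delta$ reverse the order of products in exactly the same way, so the inductive step closes cleanly. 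Finally, from $\omega(x_{i,j})=x_{n+1-i,n+1-j}$ one checks that $\omega^2=\mathrm{id}$, so $\omega$ is its own inverse.

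With the intertwining relation in hand, the invariance of $\tilde{h}$ follows formally. Writing $\Delta(a)=\sum a_{(1)}\otimes a_{(2)}$ in Sweedler notation, the relation gives $\Delta(\omega(a))=\sum\omega(a_{(1)})\otimes\omega(a_{(2)})$. Applying the right invariance of $h$ to $\omega(a)$ yields $\sum\omega(a_{(1)})\,\tilde{h}(a_{(2)})=h(\omega(a))1=\tilde{h}(a)1$, and then applying $\omega$ to both sides (using $\omega^2=\mathrm{id}$, $\omega(1)=1$, and the linearity of $\omega$ together with the fact that the $\tilde{h}(a_{(2)})$ are scalars) produces $(\mathrm{id}\otimes\tilde{h})\Delta(a)=\tilde{h}(a)1$. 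The identical argument starting from the left invariance of $h$ gives $(\tilde{h}\otimes\mathrm{id})\Delta(a)=\tilde{h}(a)1$. Thus $\tilde{h}$ is a normalized bi-invariant functional, and uniqueness yields $\tilde{h}=h$.

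I expect the main obstacle to be the extension of $\Delta\circ\omega=(\omega\otimes\omega)\circ\Delta$ beyond the generators, precisely because $\omega$ reverses the order of products: one must confirm that the order-reversals produced on the two sides are compatible, which is exactly why it is essential that $\omega\otimes\omega$ be an anti-homomorphism. A secondary technical point, which I would take as granted from the stated definition, is that $x_{i,j}\mapsto x_{n+1-i,n+1-j}$ respects the defining relations of $\mathcal{O}(SL_q(n))$ and therefore genuinely extends to a well-defined anti-homomorphism.
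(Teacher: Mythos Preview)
Your proposal is correct and follows essentially the same route as the paper: establish the intertwining relation $\Delta\circ\omega=(\omega\otimes\omega)\circ\Delta$, combine it with the bi-invariance of $h$ applied to $\omega(a)$, apply $\omega$ once more to strip the remaining $\omega$ from the untouched tensor factor, and invoke uniqueness of the Haar state. Your explicit discussion of why the intertwining relation extends from generators to arbitrary elements (via both sides being anti-homomorphisms) and your use of $\omega^2=\mathrm{id}$ actually spell out details the paper leaves implicit.
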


\noindent To keep consistency with the author's previous work, we denote the generators of $\mathcal{O}(SL_q(3))$ as:
\begin{equation*}
\begin{matrix}
a&b&c\\
d&e&f\\
g&h&k.
\end{matrix}
\end{equation*}
Recall that standard monomials of order $m=c_1+c_2+c_3+c_4+c_5+c_6$ are in the form:
$$(aek)^{c_1}(afh)^{c_2}(bdk)^{c_3}(bfg)^{c_4}(cdh)^{c_5}(ceg)^{c_6}.$$
and segments $aek$, $afh$, and $bdk$ are defined as {\bf high-complexity segments}; segments $bfg$, $cdh$, and $ceg$ are defined as {\bf low-complexity segments}. The monomial basis we use in this paper is the same as Lu~\cite{lu2023}:
$$\{(aek)^{m_1}(afh)^{m_2}(bdk)^{m_3}(bfg)^{m_4}(cdh)^{m_5}(ceg)^{m_6}, m_i\in\mathbf{N}_0 \text{ and } m_2\cdot m_3\cdot m_6=0\}.$$ By applying the three (anti)homomorphisms, we will show that ($0\le r,s,i,j$):
\newpage
\begin{theorem}[Main Theorem] The following equation hold on $\mathcal{O}(SL_q(3))$:
\begin{enumerate}
    \item[1)] \begin{equation}\label{symmetric_re1}
    \begin{split}
        &h\left((afh)^{r}(bdk)^{s}(bfg)^{i}(cdh)^{j}(ceg)^{m-r-s-i-j}\right)\\
        =&h\left((afh)^{s}(bdk)^{r}(bfg)^{j}(cdh)^{i}(ceg)^{m-r-s-i-j}\right)\\
        =&h\left((afh)^{s}(bdk)^{r}(bfg)^{i}(cdh)^{j}(ceg)^{m-r-s-i-j}\right)\\
        =&h\left((afh)^{r}(bdk)^{s}(bfg)^{j}(cdh)^{i}(ceg)^{m-r-s-i-j}\right)\\
        \end{split}
    \end{equation}
    \item[2)] \begin{equation}\label{symmetric_re2}
        \begin{split}
            &h\left((aek)^u(afh)^v(bdk)^w(bfg)^s(cdh)^r(ceg)^{m-u-v-w-r-s}\right)\\
            =&h\left((aek)^u(afh)^v(bdk)^w(bfg)^r(cdh)^s(ceg)^{m-u-v-w-r-s}\right)
        \end{split}
    \end{equation}
    \item[3)] \begin{equation}\label{symmetric_re3}
        \begin{split}
            &h\left((aek)^u(afh)^v(bdk)^w\right)\\
            =&h\left((aek)^u(afh)^w(bdk)^v\right)
        \end{split}
    \end{equation}
\end{enumerate}
\end{theorem}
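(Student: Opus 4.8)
The plan is to reduce every claimed identity to the interplay of three facts: that $\gamma$ and $\omega$ permute the six segments among themselves, that the Haar state is a trace on the subalgebra generated by the segments, and that a small number of segments genuinely commute. First I would record the action of the flips on segments by applying the definitions entry by entry. Because $\omega$ is an anti-homomorphism that both reverses order and sends $x_{i,j}$ to $x_{4-i,4-j}$, it carries each segment to a standard segment with \emph{no} reordering at all: $\omega(aek)=aek$, $\omega(ceg)=ceg$, $\omega(afh)=bdk$, $\omega(bdk)=afh$, $\omega(bfg)=cdh$, $\omega(cdh)=bfg$. For the transpose homomorphism $\gamma$ one finds that it interchanges $bfg\leftrightarrow cdh$ and fixes $aek,afh,bdk,ceg$; here the only reorderings needed to return to standard form (for instance $\gamma(bfg)=dhc$) move a generator past an anti-diagonal, hence commuting, pair, so no $q$-factors or lower-order corrections appear. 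Combined with the identities $\hat h=h$ and $\tilde h=h$ proved above, this already lets me transport exponents between $afh\leftrightarrow bdk$ and between $bfg\leftrightarrow cdh$ inside $h$.

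Second I would exploit the modular automorphism. A segment $x_{1,\sigma(1)}x_{2,\sigma(2)}x_{3,\sigma(3)}$ is scaled by $\eta$ by the factor $q^{\sum_i(2i+2\sigma(i)-2n-2)}=q^{0}=1$, since $\sum_i i=\sum_i \sigma(i)$; thus \emph{every} segment is fixed by $\eta$, and so is every monomial in the basis. The twisted-trace property $h(XY)=h(Y\,\eta(X))$ that characterises the modular automorphism then collapses, in either convention, to honest cyclicity $h(XY)=h(YX)$ whenever $X$ and $Y$ are products of segments. In other words $h$ restricts to a trace on the segment subalgebra, so I may freely rotate whole blocks of segments inside $h$.

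The technical heart is a commutation lemma: $bfg$ and $cdh$ commute, and $ceg$ commutes with each of $bfg$ and $cdh$. I would prove these by pushing the three generators of one segment past the other segment one at a time. Only row/column relations (pure $q$-factors) and anti-diagonal commutations occur, and crucially the non-commuting diagonal pairs, such as $b,f$ inside $bfg$ or $d,h$ inside $cdh$, never have to be interchanged because they remain inside a single segment; hence no $(q-q^{-1})$ correction terms are generated. The accumulated $q$-exponents then cancel: moving $b,f,g$ past $cdh$ contributes $q^{+2},q^{-2},q^{0}$, whose product is $1$, giving $bfg\cdot cdh=cdh\cdot bfg$, and the same balancing yields the two identities involving $ceg$. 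This cancellation is the same balanced-weight phenomenon that makes $\eta$ fix the segments, and it is the step I expect to be the main obstacle, since it is precisely where one must verify that the flips act without corrections and that the relevant $q$-powers vanish.

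With these ingredients the three parts become bookkeeping. For part 3, applying $\omega$ produces $(afh)^w(bdk)^v(aek)^u$, and a single cyclic rotation yields $(aek)^u(afh)^w(bdk)^v$. For part 2, applying $\gamma$ turns $(bfg)^s(cdh)^r$ into $(cdh)^s(bfg)^r$, and interchanging these now-commuting blocks lands exactly on the right-hand side. For part 1, applying $\omega$ gives $(ceg)^t(bfg)^j(cdh)^i(afh)^s(bdk)^r$; cyclicity moves the block $(afh)^s(bdk)^r$ to the front, and the commutation lemma slides $(ceg)^t$ past $(bfg)^j(cdh)^i$, producing the doubly-swapped monomial. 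Composing with $\gamma$, which only interchanges $bfg\leftrightarrow cdh$, then delivers the remaining single-pair swaps $r\leftrightarrow s$ and $i\leftrightarrow j$, so that all four monomials share the same Haar value. Everything after the commutation lemma and the clean flip action is routine manipulation with the trace property.
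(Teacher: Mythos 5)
Your proposal is correct and follows essentially the same route as the paper: both rest on $\hat h=h$, $\tilde h=h$, the fact that $\eta$ fixes each segment (hence $h$ is tracial on products of segments), and the commutativity of the low-complexity segments. The only difference is that you isolate the commutation lemma for $bfg$, $cdh$, $ceg$ as an explicit step with the $q$-exponent bookkeeping, whereas the paper invokes it implicitly; your verification of that point is accurate.
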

\noindent Equation (\ref{symmetric_re1}), (\ref{symmetric_re2}), and (\ref{symmetric_re3}) imply that we only need to compute the Haar state of one of the monomials in these symmetric relations. Thus, the computational load of the algorithm proposed in this paper is largely reduced comparing to that of the algorithm in Lu~\cite{lu2023}. We also derive the explicit recursive relation between standard monomials in the form of $(cdh)^r(bfg)^t(ceg)^{m-r-t}$ using the modular automorphism.\\
\\
In this paper, we will assume that the Haar states of monomials appearing in the Source Matrix and monomials in the form of $(cdh)^i(ceg)^{m-i}$ with $1\le i\le m$ are known~\cite{lu2023}. We will use the method proposed in Lu~\cite{lu2023} Section 2.3 to construct linear relations between the Haar states of our monomial basis. The algorithm we proposed in this paper is as follow:
\begin{enumerate}
    \item[Step 1)]Compute the Haar state of monomials in the form of $(cdh)^r(bfg)^s(ceg)^{m-r-t}$ by the following recursive relation:
    \begin{equation*}
    \begin{split}
        &\frac{q^2(q^{m-s}-q^{s-m})^2}{(1-q^2)^2}\cdot h\left((cdh)^r(bfg)^{s+1}(ceg)^{m-r-s-1}\right)\\
        =&-\frac{q}{q^2-1}\cdot h\left((cdh)^r(bfg)^{s}(ceg)^{m-r-s}\right)\\
        &-\sum_{i=0}^{s-1}a_i\cdot h\left((cdh)^r(bfg)^{s-i}(ceg)^{m-r-s+i}\right)\\
        &-(q^{-1}-q)^{s-1}q^{-2m+2s}\cdot h\left((cdh)^r(ceg)^{m-r}\right)
    \end{split}
\end{equation*}
where
\begin{equation*}
    a_i=(q^{-1}-q)^{i-1}{s+1\choose i+1}q^{-2m+2s}+(q-q^{-1})^{i-1}q^{2i-2}{s\choose i+1}q^{2m-2s+4}
\end{equation*}
    \item[Step 2)] Compute the Haar state of monomials in the form of\\ $(afh)^w(bfg)^s(cdh)^r(ceg)^{m-w-r-s}$ by a recursive relation on the number of generator $a$:
    \begin{enumerate}
        \item[i)] if $m-w-r-s\ge 1$, then use the equation:
        \begin{equation*}
            \begin{split}
                & h\left((afh)^w(bfg)^s(cdh)^r(ceg)^{m-w-r-s}\right)\\
                =&\sum_{i=0}^{w-1}\frac{q^{2i}(1-q^2)}{1-q^{2w}}\cdot h\left((afh)^ibfgcdh(afh)^{w-1-i}(cdh)^r(ceg)^{m-w-r-s-1}\right)
            \end{split}
        \end{equation*}
        \item[ii)] if $m-w-r-s=0$, then compute the Haar state of  $(afh)^w(bfg)^s(cdh)^r$ by the linear relation derived from equation basis\\ $(afh)^{w-1}(cdh)^{r+1}(bfg)^{m-r-w}$ with comparing basis $(aek)^{m-1}bdk$.
    \end{enumerate}
    \item[Step 3)] Find the Haar state of $(bdk)^w(bfg)^s(cdh)^r(ceg)^{m-w-r-s}$ using Equation (\ref{symmetric_re1}).
    \item[Step 4)] Compute the Haar state of monomials in the form of\\ $(aek)^u(afh)^v(bdk)^w(bfg)^s(cdh)^r(ceg)^{m-u-v-w-r-s}$ by a recursive relation on the total number of high complexity segments $u+v+w$:
    \begin{enumerate}
        \item[i)] if $u=0$, $m-v-w-r-s\ge 1$, use the relation:
        \begin{equation*}
        \begin{split}
            afhbdkceg =& q*aekbfgcdh+(1 - q^2)*aekbfgceg\\
        &+(1 - q^2)*aekcdhceg+(q^2 - 1)^2/q*aek(ceg)^2\\
        &+(1 - q^2)*afhbfgcdh+(q^3 - q)*afhbfgceg\\
        &+(q^3 - q)*afhcdhceg-(q^2 - 1)^2*afh(ceg)^2
        \end{split}
    \end{equation*}
    to rewrite $(afh)^v(bdk)^w(bfg)^s(cdh)^r(ceg)^{m-v-w-r-s}$ as a linear combination of standard monomials with at most $v+w-1$ high complexity segments.
    \item[ii)] if $u=0$, $m-v-w-r-s=0$, then use the linear relation derived from equation basis $(afh)^v(bdk)^{w-1}(bfg)^{s+1}(cdh)^{m-v-w-s}$ and comparing basis $(aek)^{m-1}afh$ to compute the Haar state of $(afh)^v(bdk)^{w}(bfg)^{s}(cdh)^{m-v-w-s}$.
    \item[iii)] if $u\ge 1$, apply the inductive algorithm in Section 6.2.3.
    \end{enumerate}
\end{enumerate}
In the above algorithm, Equation (\ref{symmetric_re1}) takes effect in Step 1), Step 2), Step 3), Step 4) i) and Step 4) ii) and Equation (\ref{symmetric_re2}) and (\ref{symmetric_re3}) take effect in Step 4 iii). In the algorithm in Lu~\cite{lu2023}, then number of linear equations in the algorithm is the same as the number of standard monomials of order $m$. In this algorithm, the number of linear equations is only a half of the number of standard monomials of order $m$. Hence, the number of linear equations used in this algorithm reduces to a half of that used in the algorithm in Lu~\cite{lu2023}.

\section{Three (anti-)homomorphisms that preserve the Haar state and the proof of the main theorem}
\subsection{The Modular Automorphism on $\mathcal{O}(SL_q(n))$}
In this subsection, we follow NYM's paper~\cite{noumi1993finite} to introduce the \textit{\textbf{modular automorphism}}. Although NYM introduced the modular automorphism on $\mathcal{O}(SU_q(n))$, but their results are directly applicable on $\mathcal{O}(SL_q(n))$.

\hfill

\noindent The modular automorphism on $\mathcal{O}(SL_q(n))$, $\eta$, is defined as:
$$\eta(x_{i,j})=q^{2i+2j-2n-2}x_{i,j}.$$
The modular automorphism satisfies:
$$h(x\cdot y)=h(y\cdot \eta(x))$$
for all $x,y\in \mathcal{O}(SL_q(n))$. By direct computation, we can show that for every standard monomial $x_{\sigma}=\prod_{i=1}^nx_{i,\sigma(i)}$ of order $1$:
$$\eta(x_\sigma)=x_\sigma.$$

\subsection{The diagonal flip homomorphism on $\mathcal{O}(SL_q(n))$}
In this section, we prove Theorem 1. We start with the following lemma:
\begin{lemma}
    Let $\tau$ be the homomorphism that flip the left and right component of a tensor product. The followings are true:
    \begin{enumerate}
        \item[1.] $\gamma(D_q)=D_q$, or in equivalence, $\gamma(1)=1$
        \item[2.] $\Delta\circ\gamma=\tau\circ(\gamma\otimes\gamma)\circ\Delta$
    \end{enumerate}
\end{lemma}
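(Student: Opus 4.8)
The plan is to prove both items by reducing each identity to the generators $x_{i,j}$ and then propagating it to all of $\mathcal{O}(SL_q(n))$ via the algebra-homomorphism structure of the maps involved. Throughout I will use the standard comultiplication $\Delta(x_{i,j})=\sum_k x_{i,k}\otimes x_{k,j}$ together with the fact, implicit in the definition of $\gamma$, that $\gamma$ is a unital algebra homomorphism with $\gamma(x_{i,j})=x_{j,i}$. For item~1 I would recall that $\mathcal{O}(SL_q(n))$ is the quotient of the quantum matrix bialgebra $\mathcal{O}(M_q(n))$ by the relation $D_q=1$, where $D_q=\sum_{\sigma\in S_n}(-q)^{\ell(\sigma)}x_{1,\sigma(1)}\cdots x_{n,\sigma(n)}$ (up to the chosen sign convention). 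Applying $\gamma$ factor by factor, which is legitimate since $\gamma$ is a homomorphism and preserves the multiplication order, turns the row-ordered expansion into the column-ordered expansion $\gamma(D_q)=\sum_{\sigma}(-q)^{\ell(\sigma)}x_{\sigma(1),1}\cdots x_{\sigma(n),n}$. The crucial input is then the classical identity that the quantum determinant is invariant under transposition, i.e. that its row-ordered and column-ordered expansions coincide; this yields $\gamma(D_q)=D_q$ in $\mathcal{O}(M_q(n))$, so $\gamma$ descends to the quotient, and since $D_q=1$ there the statement reads $\gamma(1)=1$. I expect this transpose-invariance of $D_q$ to be the main obstacle, and I would either cite it from the standard references on quantum linear groups or derive it from the $q$-analogue of the Laplace expansion.

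For item~2 I would first verify the identity on a single generator. On the left, $\Delta(\gamma(x_{i,j}))=\Delta(x_{j,i})=\sum_k x_{j,k}\otimes x_{k,i}$. On the right, $(\gamma\otimes\gamma)\Delta(x_{i,j})=\sum_k x_{k,i}\otimes x_{j,k}$, and applying $\tau$ to flip each tensor factor gives $\sum_k x_{j,k}\otimes x_{k,i}$, so the two sides agree on generators. To promote this to the whole algebra I would observe that both composites $\Delta\circ\gamma$ and $\tau\circ(\gamma\otimes\gamma)\circ\Delta$ are algebra homomorphisms from $\mathcal{O}(SL_q(n))$ into $\mathcal{O}(SL_q(n))\otimes\mathcal{O}(SL_q(n))$. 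Here I use that $\Delta$, $\gamma$, and $\gamma\otimes\gamma$ are homomorphisms, and that the flip $\tau$ is itself an algebra homomorphism, since $\tau((a\otimes b)(c\otimes d))=\tau(ac\otimes bd)=bd\otimes ac=\tau(a\otimes b)\,\tau(c\otimes d)$. Two algebra homomorphisms that agree on a generating set agree everywhere, so the generator computation closes the argument.

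A final bookkeeping point I would check is well-definedness: item~1 ensures that $\gamma$ respects the defining relation $D_q=1$, so there is no obstruction to either side of item~2 being defined on the quotient, and the generator-level identity therefore extends legitimately. With the lemma in hand, the normalization $\gamma(1)=1$ from item~1 and the comultiplication twist from item~2 are exactly the two ingredients needed to show that $h\circ\gamma$ is again a normalized invariant functional, whence $\hat{h}=h$ follows from uniqueness of the Haar state.
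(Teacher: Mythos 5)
Your proposal is correct and follows essentially the same route as the paper: item 1 is reduced to the transpose-invariance of the quantum determinant (which the paper establishes via the reordering identity $x_{\sigma(1),1}\cdots x_{\sigma(n),n}=x_{1,\sigma^{-1}(1)}\cdots x_{n,\sigma^{-1}(n)}$ together with $\ell(\sigma)=\ell(\sigma^{-1})$, cited from Noumi--Yamada--Mimachi), and item 2 is the same generator computation, with your explicit remark that both composites are algebra homomorphisms merely making precise what the paper leaves implicit.
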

\begin{proof}

\hfill

    \begin{enumerate}
        \item[1.] First, we recall a simple fact from NYM~\cite{noumi1993finite}: if $\tau,\sigma$ are two permutations on $n$ letters and $\tau=\sigma^{-1}$, then 
        $$x_{\sigma(1),1}\cdots x_{\sigma(n),n}=x_{1,\tau(1)}\cdots x_{n,\tau(n)}$$
        and $l(\sigma)=l(\tau)$, where $l(\sigma)$ is the inversion number of permutation $\sigma$. Hence, we get:
        \begin{equation*}
            \begin{split}
                &\gamma\left(x_{1,\tau(1)}\cdots x_{n,\tau(n)}\right)=x_{\tau(1),1}\cdots x_{\tau(n),n}=x_{1,\sigma(1)}\cdots x_{n,\sigma(n)}.
            \end{split}
        \end{equation*}
        Applying the relation to $\gamma(D_q)$, we find that:
        \begin{equation*}
            \begin{split}
                \gamma(D_q)=&\sum_{\tau\in S_n}(-q)^{l(\tau)}\cdot \gamma\left(x_{1,\tau(1)}\cdots x_{n,\tau(n)}\right)\\
                =&\sum_{\tau\in S_n}(-q)^{l(\tau)}\cdot x_{1,\tau^{-1}(1)}\cdots x_{n,\tau^{-1}(n)}\\
                =&\sum_{\tau\in S_n}(-q)^{l(\tau^{-1})}\cdot x_{1,\tau^{-1}(1)}\cdots x_{n,\tau^{-1}(n)}\\
                =&D_q
            \end{split}
        \end{equation*}
        \item[2.] \begin{equation*}
    \begin{split}
        \Delta\circ\gamma(x_{i,j})&=\Delta(x_{j,i})=\sum_{k=1}^nx_{j,k}\otimes x_{k,i}\\
        =&\tau\left(\sum_{k=1}^nx_{k,i}\otimes x_{j,k}\right)\\
        =&\tau\circ(\gamma\otimes\gamma)\left(\sum_{k=1}^nx_{i,k}\otimes x_{k,j}\right)\\
        =&\tau\circ(\gamma\otimes\gamma)\circ\Delta(x_{i,j})
    \end{split}
\end{equation*}
    \end{enumerate}
\end{proof}
\noindent\textbf{Theorem 2}: \textit{Define }$\hat{h}=h\circ\gamma$. \textit{Then,} $\hat{h}=h$ \textit{on} $\mathcal{O}(SL_q(n))$.
\begin{proof}
    Let $x$ be a monomial in $\mathcal{O}(SL_q(n))$. We may write $\Delta(x)=\sum x_{(1)}\otimes x_{(2)}$ By the left translation invariant property of $h$, we get:
    \begin{equation*}
        \begin{split}
            \hat{h}(x)\cdot 1=&h\circ\gamma(x)\cdot 1=(id\otimes h)\circ\Delta\circ\gamma(x)\\
            =&(id\otimes h)\circ\tau\circ(\gamma\otimes\gamma)\circ\Delta(x)\\
            =&(h\otimes id)\circ(\gamma\otimes\gamma)\circ\Delta(x)\\
            =&\sum\hat{h}(x_{(1)})\cdot \gamma(x_{(2)})
        \end{split}
    \end{equation*}
    Applying $\gamma$ on both sides of the equation, we find that:
    $$\hat{h}(x)\cdot 1=\hat{h}(x)\cdot \gamma(1)=\sum\hat{h}(x_{(1)})\cdot x_{(2)}=(\hat{h}\otimes id)\circ\Delta(x).$$
    In other words, $\hat{h}$ has the right translation invariant property. Similarly, using the right translation invariant property of $h$, we can show the left translation invariant property of $\hat{h}$. Finally, we have $\hat{h}(1)=h(1)=1$. Thus, by the uniqueness of the Haar state on $\mathcal{O}(SL_q(n))$, we must have $\hat{h}=h$.
\end{proof}

\subsection{The double flip anti-homomorphism on $\mathcal{O}(SL_q(n))$}
In this section, we prove Theorem 2. We start with the following lemma:
\begin{lemma} \label{Lemma 5}
The following equations hold:
\begin{enumerate}
    \item[1)] $\omega(D_q)=D_q$ or in equivalence,  $\omega(1)=1$
    \item[2)] $\Delta\circ\omega=(\omega\otimes\omega)\circ\Delta$
\end{enumerate}
\end{lemma}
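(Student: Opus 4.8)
The plan is to follow the same two-step template used for $\gamma$ in the previous subsection, modifying each step to account for the fact that $\omega$ reverses products. For part 1, I would first record how $\omega$ acts on a single standard monomial. Since $\omega$ is an anti-homomorphism with $\omega(x_{i,j})=x_{n+1-i,n+1-j}$, applying it to $x_{1,\tau(1)}\cdots x_{n,\tau(n)}$ reverses the order of the factors and sends $x_{i,\tau(i)}$ to $x_{n+1-i,\,n+1-\tau(i)}$. After the reversal the row indices come out as $1,2,\dots,n$ in increasing order, so the image is again a standard monomial $x_{1,\sigma(1)}\cdots x_{n,\sigma(n)}$ with $\sigma=w_0\tau w_0$, where $w_0\in S_n$ is the longest element $w_0(i)=n+1-i$. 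A convenient feature here, compared with $\gamma$, is that no reordering of noncommuting generators is required: the $\omega$-image is already in standard form. I would then substitute this into $D_q=\sum_{\tau\in S_n}(-q)^{l(\tau)}x_{1,\tau(1)}\cdots x_{n,\tau(n)}$ and reindex the sum by $\sigma$.

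The crux of part 1 is the identity $l(w_0\tau w_0)=l(\tau)$, i.e. that conjugation by the longest element preserves the inversion number. I would establish this by a direct bijection on inversion pairs: a pair $i<j$ is an inversion of $\sigma=w_0\tau w_0$ exactly when the pair $(n+1-j,\,n+1-i)$ is an inversion of $\tau$. This gives $(-q)^{l(\tau)}=(-q)^{l(\sigma)}$, and since $\tau\mapsto w_0\tau w_0$ is a bijection of $S_n$, the reindexed sum is literally $D_q$, so $\omega(D_q)=D_q$. The equivalence with $\omega(1)=1$ follows exactly as in the $\gamma$ case.

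For part 2, I would first verify $\Delta\circ\omega=(\omega\otimes\omega)\circ\Delta$ on generators. The left side gives $\Delta(x_{n+1-i,n+1-j})=\sum_{k=1}^n x_{n+1-i,k}\otimes x_{k,n+1-j}$, while the right side gives $\sum_{k=1}^n x_{n+1-i,n+1-k}\otimes x_{n+1-k,n+1-j}$, and the substitution $k\mapsto n+1-k$ identifies the two sums. To pass from generators to all of $\mathcal{O}(SL_q(n))$, the key observation is that \emph{both} composites are anti-homomorphisms $\mathcal{O}(SL_q(n))\to\mathcal{O}(SL_q(n))\otimes\mathcal{O}(SL_q(n))$: the map $\Delta\circ\omega$ is anti because $\Delta$ is an algebra map and $\omega$ is anti, while $(\omega\otimes\omega)\circ\Delta$ is anti because $\Delta$ is an algebra map and $\omega\otimes\omega$ is an anti-homomorphism for the componentwise product on the tensor square. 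Since two anti-homomorphisms that agree on a generating set agree everywhere, this finishes the proof.

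The main obstacle I anticipate is bookkeeping rather than anything structural: one must keep the order-reversals straight so that part 1 genuinely yields a standard monomial with increasing row indices (hence the clean conjugation $w_0\tau w_0$), and one must confirm in part 2 that $\omega\otimes\omega$ is an anti-homomorphism, not a homomorphism, so that the two sides match as anti-homomorphisms. The single genuinely content-bearing ingredient is the length-preservation fact $l(w_0\tau w_0)=l(\tau)$, which I would isolate as the inline bijection described above.
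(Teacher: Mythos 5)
Your proposal is correct and follows essentially the same route as the paper: part 1 via the observation that $\omega$ sends the standard monomial of $\tau$ to that of $w_0\tau w_0$ together with the inversion-pair bijection showing $l(w_0\tau w_0)=l(\tau)$, and part 2 via the generator computation with the substitution $k\mapsto n+1-k$. Your explicit remark that both composites in part 2 are anti-homomorphisms for the componentwise product on the tensor square (so that checking on generators suffices) is a justification the paper leaves implicit, but it is the same argument.
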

\begin{proof}
\begin{enumerate}
    \item[1)]  $\omega$ maps a standard monomial $\prod_{i=1}^n x_{i,\sigma(i)}$ to:
$$\omega \left(\prod_{i=1}^n x_{i,\sigma(i)}\right)=\prod_{n+1-i=1}^nx_{n+1-i,n+1-\sigma(i)}$$
which is another standard monomial corresponding to the permutation that send $n+1-i$ to $n+1-\sigma(i)$. Denote this permutation as $\omega\cdot\sigma$. It is easy to check that $\omega\cdot\omega\cdot\sigma=\sigma$. Thus, $\sigma$ and $\omega\cdot\sigma$ is one to one. Notice that if $i<j$ and $\sigma(i)>\sigma(j)$, then $n+1-i>n+1-j$ and $n+1-\sigma(i)<n+1-\sigma(j)$ and vice versa. In other word, there is a one to one correspondence between the inversions of $\sigma$ and the inversions of $\omega\cdot\sigma$. Therefore, we have:
\begin{equation*}
    \begin{split}
        \omega(1)=\omega(D_q)=&\omega\left(\sum_{\sigma\in S_n}(-q)^{l(\sigma)}\prod_{i=1}^n x_{i,\sigma(i)}\right)\\
        =&\sum_{\sigma\in S_n}(-q)^{l(\omega\cdot\sigma)}\prod_{j=1}^n x_{j,\omega\cdot\sigma(j)}\\
        =&D_q=1
    \end{split}
\end{equation*}
    \item[2)] It suffice to verify the equation on the generators of $\mathcal{O}(SL_q(n))$: 
    \begin{equation*}
        \begin{split}
            \Delta\circ\omega(x_{i,j})&=\Delta(x_{n+1-i,n+1-j})=\sum_{k=1}^n x_{n+1-i,k}\otimes x_{k,n+1-j}\\
            &=\sum_{k=1}^n x_{n+1-i,n+1-k}\otimes x_{n+1-k,n+1-j}\\
            &=\sum_{k=1}^n \omega(x_{i,k})\otimes \omega(x_{k,j})=(\omega\otimes\omega)\circ\Delta(x_{i,j})
        \end{split}
    \end{equation*}
\end{enumerate}
\end{proof}

\hfill

\noindent\textbf{Theorem 2}: \textit{Define} $\Tilde{h}=h\circ \omega$. \textit{Then, }$\Tilde{h}=h$ \textit{on }$\mathcal{O}(SL_q(n))$.
\begin{proof}
    By the left translation invariant property, we get:
\begin{equation*}
    (id\otimes h)\Delta(\omega(a))=h(\omega(a))\cdot 1
\end{equation*}
If we write $\Delta(a)=\sum a_{(1)}\otimes a_{(2)}$, then the left translation invariant property implies that:
\begin{equation*}
\begin{split}
    h(\omega(a))\cdot 1&=(id\otimes h)\Delta(\omega(a))\\
    &=(id\otimes h)\circ(\omega\otimes\omega)(\Delta(a))\\
    &=\sum h(\omega(a_{(2)}))\cdot\omega(a_{(1)})\\
    &=\sum \Tilde{h}(a_{(2)})\cdot\omega(a_{(1)})\\
\end{split}
\end{equation*}
If we apply $\omega$ on both sides of $\Tilde{h}(a)\cdot 1=\sum \Tilde{h}(a_{(2)})\cdot\omega(a_{(1)})$, we get:
$$\sum \Tilde{h}(a_{(2)})\cdot a_{(1)}=\Tilde{h}(a)\cdot 1.$$
This is equivalent to say that:
$$(id\otimes \Tilde{h})\Delta(a)=\Tilde{h}(a)\cdot 1$$
Similarly, we can show that:
$$(\Tilde{h}\otimes id)\Delta(a)=\Tilde{h}(a)\cdot 1$$
and $\Tilde{h}(1)=1$. Then, by the uniqueness of the Haar state, we know that $\Tilde{h}$ is identical to $h$ on $\mathcal{O}(SL_q(n))$.
\end{proof}

\subsection{Proof of the main theorem}
We split the main theorem into 3 parts.\\
\noindent\textbf{Theorem 3 part 1)}
    \begin{equation*}
    \begin{split}
        &h\left((afh)^{r}(bdk)^{s}(bfg)^{i}(cdh)^{j}(ceg)^{m-r-s-i-j}\right)\\
        =&h\left((afh)^{s}(bdk)^{r}(bfg)^{j}(cdh)^{i}(ceg)^{m-r-s-i-j}\right)\\
        =&h\left((afh)^{s}(bdk)^{r}(bfg)^{i}(cdh)^{j}(ceg)^{m-r-s-i-j}\right)\\
        =&h\left((afh)^{r}(bdk)^{s}(bfg)^{j}(cdh)^{i}(ceg)^{m-r-s-i-j}\right)\\
    \end{split}
\end{equation*}
\begin{proof}

\hfill

    \begin{enumerate}
        \item[1)] First, we show that:
    \begin{equation*}
    \begin{split}
        &h\left((afh)^{r}(bdk)^{s}(bfg)^{i}(cdh)^{j}(ceg)^{m-r-s-i-j}\right)\\
        =&h\left((afh)^{s}(bdk)^{r}(bfg)^{j}(cdh)^{i}(ceg)^{m-r-s-i-j}\right).
    \end{split}
    \end{equation*}
    We have:
    \begin{equation*}
        \begin{split}
            &h\left((afh)^{r}(bdk)^{s}(bfg)^{i}(cdh)^{j}(ceg)^{m-r-s-i-j}\right)\\
            =&\Tilde{h}\left((afh)^{r}(bdk)^{s}(bfg)^{i}(cdh)^{j}(ceg)^{m-r-s-i-j}\right)\\
            =&h\left((ceg)^{m-r-s-i-j}(bfg)^{j}(cdh)^{i}(afh)^{s}(bdk)^{r}\right)\\
            =&h\left((afh)^{s}(bdk)^{r}\eta\left((ceg)^{m-r-s-i-j}(bfg)^{j}(cdh)^{i}\right)\right)\\
             =&h\left((afh)^{s}(bdk)^{r}(bfg)^{j}(cdh)^{i}(ceg)^{m-r-s-i-j}\right)\\
        \end{split}
    \end{equation*}
    \end{enumerate}
    \item[2)] Then, we show that:
    \begin{equation*}
    \begin{split}
        &h\left((afh)^{s}(bdk)^{r}(bfg)^{j}(cdh)^{i}(ceg)^{m-r-s-i-j}\right)\\
        =&h\left((afh)^{s}(bdk)^{r}(bfg)^{i}(cdh)^{j}(ceg)^{m-r-s-i-j}\right).
    \end{split}
\end{equation*}
    We have:
    \begin{equation*}
    \begin{split}
        &h\left((afh)^{s}(bdk)^{r}(bfg)^{j}(cdh)^{i}(ceg)^{m-r-s-i-j}\right)\\
        =&\hat{h}\left((afh)^{s}(bdk)^{r}(bfg)^{j}(cdh)^{i}(ceg)^{m-r-s-i-j}\right)\\
        =&h\left((ahf)^{s}(dbk)^{r}(dhc)^{j}(gbf)^{i}(ceg)^{m-r-s-i-j}\right)\\
        =&h\left((afh)^{s}(bdk)^{r}(bfg)^{i}(cdh)^{j}(ceg)^{m-r-s-i-j}\right).
    \end{split}
\end{equation*}
    \item[3)] Finally, we show that:
    \begin{equation*}
    \begin{split}
        &h\left((afh)^{s}(bdk)^{r}(bfg)^{i}(cdh)^{j}(ceg)^{m-r-s-i-j}\right)\\
        =&h\left((afh)^{r}(bdk)^{s}(bfg)^{j}(cdh)^{i}(ceg)^{m-r-s-i-j}\right).
    \end{split}
\end{equation*}
We have:
\begin{equation*}
    \begin{split}
        &h\left((afh)^{s}(bdk)^{r}(bfg)^{i}(cdh)^{j}(ceg)^{m-r-s-i-j}\right)\\
        =&\Tilde{h}\left((afh)^{s}(bdk)^{r}(bfg)^{i}(cdh)^{j}(ceg)^{m-r-s-i-j}\right)\\
        =&h\left((ceg)^{m-r-s-i-j}(bfg)^{j}(cdh)^{i}(afh)^{r}(bdk)^{s}\right)\\
        =&h\left((afh)^{r}(bdk)^{s}\eta((ceg)^{m-r-s-i-j}(bfg)^{j}(cdh)^{i})\right)\\
        =&h\left((afh)^{r}(bdk)^{s}(bfg)^{j}(cdh)^{i}(ceg)^{m-r-s-i-j}\right)
    \end{split}
\end{equation*}
\end{proof}
\noindent\textbf{Theorem 3 part 2)}
\begin{equation*}
    \begin{split}
        &h\left((aek)^u(afh)^v(bdk)^w(bfg)^s(cdh)^r(ceg)^{m-u-v-w-r-s}\right)\\
        =&h\left((aek)^u(afh)^v(bdk)^w(bfg)^r(cdh)^s(ceg)^{m-u-v-w-r-s}\right)
    \end{split}
\end{equation*}
\begin{proof}
    \begin{equation*}
        \begin{split}
            &h\left((aek)^u(afh)^v(bdk)^w(bfg)^s(cdh)^r(ceg)^{m-u-v-w-r-s}\right)\\
            =&\hat{h}\left((aek)^u(afh)^v(bdk)^w(bfg)^s(cdh)^r(ceg)^{m-u-v-w-r-s}\right)\\
            =&h\left((aek)^u(ahf)^v(dbk)^w(dhc)^s(gbf)^r(ceg)^{m-u-v-w-r-s}\right)\\
            =&h\left((aek)^u(afh)^v(bdk)^w(bfg)^r(cdh)^s(ceg)^{m-u-v-w-r-s}\right)
        \end{split}
    \end{equation*}
\end{proof}
\noindent\textbf{Theorem 3 part 3)}
\begin{equation*}
    \begin{split}
        &h\left((aek)^u(afh)^v(bdk)^w\right)\\
        =&h\left((aek)^u(afh)^w(bdk)^v\right)
    \end{split}
\end{equation*}
\begin{proof}
    \begin{equation*}
        \begin{split}
            &h\left((aek)^u(afh)^v(bdk)^w\right)\\
            =&\Tilde{h}\left((aek)^u(afh)^v(bdk)^w\right)\\
            =&h\left((afh)^w(bdk)^v(aek)^u\right)\\
            =&h\left((aek)^u\eta\left((afh)^w(bdk)^v\right)\right)\\
            =&h\left((aek)^u(afh)^w(bdk)^v\right)
        \end{split}
    \end{equation*}
\end{proof}

\section{The recursive relation for the Haar state of standard monomials in the form of $(cdh)^r(bfg)^t(ceg)^{m-r-t}$}\label{recuresive_cdhbfg}
In this section we will start our new algorithm assuming that the solution to the Source Matrix is given and the recursive relation of standard monomials in the form of $(cdh)^r(ceg)^{m-r}$ is provided~\cite{lu2023}. Before we discuss the new recursive relation, we make the following observation. Notice that:
$$cegafh=q^2*afhceg+(1-q^2)*bfgcdh$$
If we evaluate the Haar state on both sides of the equation and recall that $h\left(cegafh\right)=h\left(afh\cdot\eta(ceg)\right)=h\left(afhceg\right)$, we get:
$$h\left(afhceg\right)=q^2\cdot h\left(afhceg\right)+(1-q^2)\cdot h\left(bfgcdh\right)$$
This implies that $h\left(afhceg\right)=h\left(bfgcdh\right)$. Using similar trick, we have \\$h\left(cegafh(bfg)^{m_1}(cdh)^{m_2}(ceg)^{m_3}\right)=h\left(afh(bfg)^{m_1}(cdh)^{m_2}(ceg)^{m_3+1}\right)$ and we deduce that
\begin{equation}
    h\left(afh(bfg)^{m_1}(cdh)^{m_2}(ceg)^{m_3+1}\right)=h\left((bfg)^{m_1+1}(cdh)^{m_2+1}(ceg)^{m_3}\right).
    \label{eq:re}
\end{equation}
Next, we start to compute the recursive relation. From the previous section, we have computed the Haar state of $(cdh)^r(ceg)^{m-r}$ and $(bfg)^s(ceg)^{m-r}$. Thus, we will fix the index $r$ and derive a recursive relation in the index $s$.

\subsection{Analysis}
To compute the Haar state of $(cdh)^r(bfg)^{s+1}(ceg)^{m-r-s-1}$ with the restriction $m-r-s\ge 1$ and $r\ge 1$, we consider the linear relation derived from equation basis $(cdh)^r(bfg)^s(ceg)^{m-r-s}$ with comparing basis $(aek)^{m-1}bdk$. For the detail of this construction, see Lu~\cite{lu2023} Section 2.3. In the comultiplication of $(cdh)^r(bfg)^s(ceg)^{m-r-s}$, left components containing $(aek)^{m-1}bdk$ are in the following form:
\begin{enumerate}
    \item[1)] $(aek)^lbdk(aek)^{m-1-l}$
    \item[2)] $(aek)^lbek(aek)^kadk(aek)^{m-2-l-k}$
    \item[3)] $(aek)^ladk(aek)^kbek(aek)^{m-2-l-k}$ 
\end{enumerate}
\noindent When the comparing component is in the form $(aek)^lbdk(aek)^{m-1-l}$, the coefficient of $(aek)^{m-1}bdk$ in the decomposition of $(aek)^lbdk(aek)^{m-1-l}$ is $1$ and the corresponding relation components are:
\begin{enumerate}
    \item[1)] $(cdh)^kfah(cdh)^{r-1-k}(bfg)^s(ceg)^{m-r-s}$ 
    \item[2)] $(cdh)^r(bfg)^kecg(bfg)^{s-k-1}(ceg)^{m-r-s}$
    \item[3)] $(cdh)^r(bfg)^s(ceg)^kfbg(ceg)^{m-r-s-1-k}$ 
\end{enumerate}

\hfill

\noindent When the comparing component is in the form $(aek)^lbek(aek)^kadk(aek)^{m-2-l-k}$, the coefficient of $(aek)^{m-1}bdk$ in the decomposition of $(aek)^lbek(aek)^kadk(aek)^{m-2-l-k}$ is $1$ and the corresponding relation components are:
\begin{enumerate}
    \item[4)] $(cdh)^kfdh(cdh)^lcah(cdh)^{r-k-l-2}(bfg)^s(ceg)^{m-r-s}$ 
    \item[5)] $(cdh)^kfdh(cdh)^{r-k-1}(bfg)^lbcg(bfg)^{s-l-1}(ceg)^{m-r-s}$ 
    \item[6)]  $(cdh)^kfdh(cdh)^{r-k-1}(bfg)^{s}(ceg)^lcbg(ceg)^{m-r-s-l-1}$
    \item[7)] $(cdh)^r(bfg)^kefg(bfg)^lbcg(bfg)^{s-k-l-2}(ceg)^{m-r-s}$ 
    \item[8)]  $(cdh)^r(bfg)^kefg(bfg)^{s-k-1}(ceg)^lcbg(ceg)^{m-r-s-l-1}$ 
    \item[9)] $(cdh)^r(bfg)^s(ceg)^kfeg(ceg)^lcbg(ceg)^{m-r-s-l-k-2}$  
\end{enumerate}

\hfill

\noindent When the comparing component is in the form $(aek)^ladk(aek)^kbek(aek)^{m-2-l-k}$, the coefficient of $(aek)^{m-1}bdk$ in the decomposition of $(aek)^ladk(aek)^kbek(aek)^{m-2-l-k}$ is $q^2$ and the corresponding relation components are:
\begin{enumerate}
    \item[10)] $(cdh)^kcah(cdh)^lfdh(cdh)^{r-k-l-2}(bfg)^s(ceg)^{m-r-s}$ 
    \item[11)] $(cdh)^kcah(cdh)^{r-k-1}(bfg)^lefg(bfg)^{s-l-1}(ceg)^{m-r-s}$ 
    \item[12)]  $(cdh)^kcah(cdh)^{r-k-1}(bfg)^{s}(ceg)^lfeg(ceg)^{m-r-s-l-1}$
    \item[13)] $(cdh)^r(bfg)^kbcg(bfg)^lefg(bfg)^{s-k-l-2}(ceg)^{m-r-s}$ 
    \item[14)]  $(cdh)^r(bfg)^kbcg(bfg)^{s-k-1}(ceg)^lfeg(ceg)^{m-r-s-l-1}$ 
    \item[15)] $(cdh)^r(bfg)^s(ceg)^kcbg(ceg)^lfeg(ceg)^{m-r-s-l-k-2}$  
\end{enumerate}
Case 1): Using the modular automorphism and Equation (\ref{eq:re}), the Haar state of the monomial in case 1) can be transformed into:
\begin{equation*}
    \begin{split}
        &h\left((cdh)^kfah(cdh)^{r-1-k}(bfg)^s(ceg)^{m-r-s}\right)\\
        =&h\left((cdh)^{r}(bfg)^{s+1}(ceg)^{m-r-s-1}\right)-(q-q^{-1})h\left((cdh)^{r}(bfg)^s(ceg)^{m-r-s}\right)
    \end{split}
\end{equation*}

\hfill

\noindent Case 2): This case can be transformed into:
$$(cdh)^r(bfg)^{s-1}(ceg)^{m-r-s+1}$$

\hfill

\noindent Case 3): This case can be transformed into:
$$(cdh)^r(bfg)^{s+1}(ceg)^{m-r-s-1}-(q-q^{-1})*(cdh)^r(bfg)^{s}(ceg)^{m-r-s}$$

\hfill

\noindent Case 4): We have:
\begin{equation*}
    \begin{split}
        &(cdh)^kfdh(cdh)^lcah(cdh)^{r-k-l-2}(bfg)^s(ceg)^{m-r-s}\\
        =&q^{-2(l+1)}*(cdh)^{k+l+1}fah(cdh)^{r-k-l-2}(bfg)^s(ceg)^{m-r-s}
    \end{split}
\end{equation*}
Then, apply the modular automorphism and Equation (\ref{eq:re}), we get:
\begin{equation*}
    \begin{split}
        &q^{-2(l+1)}*h\left((cdh)^{k+l+1}fah(cdh)^{r-k-l-2}(bfg)^s(ceg)^{m-r-s}\right)\\
        =&q^{-2(l+1)}*h\left((cdh)^{r}(bfg)^{s+1}(ceg)^{m-r-s-1}\right)\\
        &-q^{-2(l+1)}(q-q^{-1})*h\left((cdh)^{r}(bfg)^s(ceg)^{m-r-s}\right)
    \end{split}
\end{equation*}

\hfill

\noindent Case 5): We have:
\begin{equation*}
    \begin{split}
        &(cdh)^kfdh(cdh)^{r-k-1}(bfg)^lbcg(bfg)^{s-l-1}(ceg)^{m-r-s}\\
        =&q^{-2(r-k)+1}\sum_{i=0}^{l+1}(q^{-1}-q)^i{l+1 \choose i}*(cdh)^{r}(bfg)^{s-i}(ceg)^{m-r-s+i}
    \end{split}
\end{equation*}

Case 6): We have:
\begin{equation*}
    \begin{split}
        &(cdh)^kfdh(cdh)^{r-k-1}(bfg)^{s}(ceg)^lcbg(ceg)^{m-r-s-l-1}\\
        =&q^{-2l-2r+2k}\sum_{i=0}^{s+1}(q^{-1}-q)^i{s+1\choose i}*(cdh)^{r}(bfg)^{s+1-i}(ceg)^{m-r-s-1+i}
    \end{split}
\end{equation*}

\hfill

\noindent Case 7): 
\begin{equation*}
    \begin{split}
        &(cdh)^r(bfg)^kefg(bfg)^lbcg(bfg)^{s-k-l-2}(ceg)^{m-r-s}\\
        =&\sum_{i=0}^{l+1}(q^{-1}-q)^i{l+1\choose i}*(cdh)^r(bfg)^{s-1-i}(ceg)^{m-r-s+1+i}
    \end{split}
\end{equation*}

\hfill

\noindent Case 8):
\begin{equation*}
    \begin{split}
        &(cdh)^r(bfg)^kefg(bfg)^{s-k-1}(ceg)^lcbg(ceg)^{m-r-s-l-1}\\
        =&q^{-2l-1}\sum_{i=0}^{s-k}(q^{-1}-q)^i{s-k\choose i}*(cdh)^r(bfg)^{s-i}(ceg)^{m-r-s+i}
    \end{split}
\end{equation*}

\hfill

\noindent Case 9): We have:
\begin{equation*}
    \begin{split}
        &(cdh)^r(bfg)^s(ceg)^kfeg(ceg)^lcbg(ceg)^{m-r-s-l-k-2}\\
        =&q^{-2l-2}*(cdh)^r(bfg)^{s+1}(ceg)^{m-r-s-1}\\
        &-q^{-2l-2}(q-q^{-1})*(cdh)^r(bfg)^{s}(ceg)^{m-r-s}
    \end{split}
\end{equation*}

\hfill

\noindent Case 10): We have:
\begin{equation*}
    \begin{split}
        &(cdh)^kcah(cdh)^lfdh(cdh)^{r-k-l-2}(bfg)^s(ceg)^{m-r-s}\\
        =&q^{2l}*(cdh)^kafh(cdh)^{r-k-1}(bfg)^s(ceg)^{m-r-s}
    \end{split}
\end{equation*}
Then, applying the modular automorphism, we find that:
\begin{equation*}
    \begin{split}
        &q^{2l}\cdot h\left((cdh)^kafh(cdh)^{r-k-1}(bfg)^s(ceg)^{m-r-s}\right)\\
        =&q^{2l}\cdot h\left((cdh)^{r}(bfg)^{s+1}(ceg)^{m-r-s-1}\right)
    \end{split}
\end{equation*}

\hfill

\noindent Case 11): We have:
\begin{equation*}
    \begin{split}
        &(cdh)^kcah(cdh)^{r-k-1}(bfg)^lefg(bfg)^{s-l-1}(ceg)^{m-r-s}\\
        =&q^{2r-2k-1}\sum_{i=0}^l(q-q^{-1})^iq^{2i}{l\choose i}\\
        &*(cdh)^kafh(cdh)^{r-k-1}(bfg)^{s-1-i}(ceg)^{m-r-s+1+i}
    \end{split}
\end{equation*}
Applying the modular automorphism, we get:
\begin{equation*}
    \begin{split}
        &h\left((cdh)^kafh(cdh)^{r-k-1}(bfg)^{s-1-i}(ceg)^{m-r-s+1+i}\right)\\
        =&h\left((cdh)^{r}(bfg)^{s-i}(ceg)^{m-r-s+i}\right)
    \end{split}
\end{equation*}
Thus, we find:
\begin{equation*}
    \begin{split}
        &h\left((cdh)^kcah(cdh)^{r-k-1}(bfg)^lefg(bfg)^{s-l-1}(ceg)^{m-r-s}\right)\\
        =&q^{2r-2k-1}\sum_{i=0}^l(q-q^{-1})^iq^{2i}{l\choose i}h\left((cdh)^{r}(bfg)^{s-i}(ceg)^{m-r-s+i}\right)
    \end{split}
\end{equation*}

\hfill

\noindent Case 12): We have:
\begin{equation*}
    \begin{split}
        &(cdh)^kcah(cdh)^{r-k-1}(bfg)^{s}(ceg)^lfeg(ceg)^{m-r-s-l-1}\\
        =&q^{2l+2r-2k-2}\sum_{i=0}^s(q-q^{-1})^iq^{2i}{s\choose i}\\
        &*(cdh)^kafh(cdh)^{r-k-1}(bfg)^{s-i}(ceg)^{m-r-s+i}
    \end{split}
\end{equation*}
Applying the modular automorphism and Equation (\ref{eq:re}), we get:
\begin{equation*}
    \begin{split}
        &h\left((cdh)^kafh(cdh)^{r-k-1}(bfg)^{s-i}(ceg)^{m-r-s+i}\right)\\
        =&h\left((cdh)^{r}(bfg)^{s+1-i}(ceg)^{m-r-s+i-1}\right).
    \end{split}
\end{equation*}
Thus, we get:
\begin{equation*}
    \begin{split}
        &h\left((cdh)^kcah(cdh)^{r-k-1}(bfg)^{s}(ceg)^lfeg(ceg)^{m-r-s-l-1}\right)\\
        =&q^{2l+2r-2k-2}\sum_{i=0}^s(q-q^{-1})^iq^{2i}{s\choose i}*h\left((cdh)^{r}(bfg)^{s+1-i}(ceg)^{m-r-s+i-1}\right)
    \end{split}
\end{equation*}

\hfill

\noindent Case 13): We have:
\begin{equation*}
    \begin{split}
        &(cdh)^r(bfg)^kbcg(bfg)^lefg(bfg)^{s-k-l-2}(ceg)^{m-r-s}\\
        =&q^2\sum_{i=0}^l(q-q^{-1})^iq^{2i}{l\choose i}*(cdh)^r(bfg)^{s-1-i}(ceg)^{m-r-s+1+i}
    \end{split}
\end{equation*}

\hfill

\noindent Case 14): We have:
\begin{equation*}
    \begin{split}
        &(cdh)^r(bfg)^kbcg(bfg)^{s-k-1}(ceg)^lfeg(ceg)^{m-r-s-l-1}\\
        =&q^{2l+1}\sum_{i=0}^{s-k-1}(q-q^{-1})^iq^{2i}{s-k-1\choose i}*(cdh)^r(bfg)^{s-i}(ceg)^{m-r-s+i}
    \end{split}
\end{equation*}

\hfill

\noindent Case 15): We have:
\begin{equation*}
    \begin{split}
        &(cdh)^r(bfg)^s(ceg)^kcbg(ceg)^lfeg(ceg)^{m-r-s-l-k-2}\\
        =&q^{2l}*(cdh)^r(bfg)^{s+1}(ceg)^{m-r-s-1}
    \end{split}
\end{equation*}

\hfill

\noindent As we can see, standard monomials appearing in the linear relation derived from equation basis $(cdh)^r(bfg)^s(ceg)^{m-r-s}$ ($m-r-s\ge 1$) with comparing basis $(aek)^{m-1}bdk$ are of the form  $(cdh)^r(bfg)^{s+1-i}(ceg)^{m-r-s-1+i}$ with $0\le i\le s+1$. Thus, by our assumption, we can compute the Haar state of $(cdh)^r(bfg)^{s+1}(ceg)^{m-r-s-1}$ from this linear relation.

\subsection{The contribution of each case}
In this subsection, we sum over all possible values of index $l$ and $k$.\\
\\
\textbf{Case 1):}
\begin{equation*}
    \begin{split}
        &r\cdot h\left((cdh)^{r}(bfg)^{s+1}(ceg)^{m-r-s-1}\right)\\
        &-r(q-q^{-1})\cdot h\left((cdh)^{r}(bfg)^s(ceg)^{m-r-s}\right)
    \end{split}
\end{equation*}
\textbf{Case 2):}
\begin{equation*}
    \begin{split}
        s\cdot h\left((cdh)^{r}(bfg)^{s-1}(ceg)^{m-r-s+1}\right)
    \end{split}
\end{equation*}
\textbf{Case 3):}
\begin{equation*}
    \begin{split}
        &(m-r-s)\cdot h\left((cdh)^r(bfg)^{s+1}(ceg)^{m-r-s-1}\right)\\
        &-(q-q^{-1})(m-r-s)\cdot h\left((cdh)^r(bfg)^{s}(ceg)^{m-r-s}\right)
    \end{split}
\end{equation*}
\textbf{Case 4):} Denote
\begin{equation*}
    \mathcal{F}_1=h\left((cdh)^{r}(bfg)^{s+1}(ceg)^{m-r-s-1}\right)-(q-q^{-1})\cdot h\left((cdh)^{r}(bfg)^s(ceg)^{m-r-s}\right)
\end{equation*}
\begin{equation*}
    \begin{split}
        &\sum_{k=0}^{r-2}\sum_{l=0}^{r-2-k}q^{-2(l+1)}\cdot\mathcal{F}_1\\
        =&\left((r-1)\frac{q^{-2}}{1-q^{-2}}+\frac{q^{-2r+2}-1}{(1-q^2)^2}\right)\cdot\mathcal{F}_1
    \end{split}
\end{equation*}
\textbf{Case 5):}
\begin{equation*}
    \begin{split}
        &\sum_{k=0}^{r-1}\sum_{l=0}^{s-1}q^{-2(r-k)+1}\sum_{i=0}^{l+1}(q^{-1}-q)^i{l+1 \choose i}\cdot h\left((cdh)^{r}(bfg)^{s-i}(ceg)^{m-r-s+i}\right)\\
        =&\frac{q^{-2r+1}-q}{1-q^2}\sum_{i=1}^{s}(q^{-1}-q)^i{s+1\choose i+1}\cdot h\left((cdh)^{r}(bfg)^{s-i}(ceg)^{m-r-s+i}\right)\\
        &+s\frac{q^{-2r+1}-q}{1-q^2}\cdot h\left((cdh)^{r}(bfg)^{s}(ceg)^{m-r-s}\right)
    \end{split}
\end{equation*}
\textbf{Case 6):}
\begin{equation*}
    \begin{split}
        &\sum_{k=0}^{r-1}\sum_{l=0}^{m-r-s-1}q^{-2l-2r+2k}\sum_{i=0}^{s+1}(q^{-1}-q)^i{s+1\choose i}\cdot h\left((cdh)^{r}(bfg)^{s+1-i}(ceg)^{m-r-s-1+i}\right)\\
        =&\frac{q^{-2r}(1-q^{2r})(1-q^{-2(m-r-s)})}{(1-q^2)(1-q^{-2})}\sum_{i=0}^{s+1}(q^{-1}-q)^i{s+1\choose i}\cdot h\left((cdh)^{r}(bfg)^{s+1-i}(ceg)^{m-r-s-1+i}\right)
    \end{split}
\end{equation*}
\textbf{Case 7):}
\begin{equation*}
    \begin{split}
        &\sum_{k=0}^{s-2}\sum_{l=0}^{s-2-k}\sum_{i=0}^{l+1}(q^{-1}-q)^i{l+1\choose i}\cdot h\left((cdh)^r(bfg)^{s-1-i}(ceg)^{m-r-s+1+i}\right)\\
        =&\sum_{j=2}^{s}(q^{-1}-q)^{j-1}{s+1\choose j+1}\cdot h\left((cdh)^r(bfg)^{s-j}(ceg)^{m-r-s+j}\right)\\
        &+\frac{s(s-1)}{2}\cdot h\left((cdh)^r(bfg)^{s-1}(ceg)^{m-r-s+1}\right)\\
    \end{split}
\end{equation*}
\textbf{Case 8):}
\begin{equation*}
    \begin{split}
        &\sum_{k=0}^{s-1}\sum_{l=0}^{m-r-s-1}q^{-2l-1}\sum_{i=0}^{s-k}(q^{-1}-q)^i{s-k\choose i}\cdot h\left((cdh)^r(bfg)^{s-i}(ceg)^{m-r-s+i}\right)\\
        =&\frac{1-q^{-2(m-r-s)}}{q-q^{-1}}\sum_{i=1}^{s}(q^{-1}-q)^i{s+1\choose i+1}\cdot h\left((cdh)^r(bfg)^{s-i}(ceg)^{m-r-s+i}\right)\\
        &+s\frac{1-q^{-2(m-r-s)}}{q-q^{-1}}\cdot h\left((cdh)^r(bfg)^{s}(ceg)^{m-r-s}\right)
    \end{split}
\end{equation*}
\textbf{Case 9):} Using the same notation as in case 4)
\begin{equation*}
    \begin{split}
        &\sum_{k=0}^{m-r-s-2}\sum_{l=0}^{m-r-s-2-k}q^{-2l-2}\cdot \mathcal{F}_1\\
        =&\left(\frac{(m-r-s-1)q^{-2}}{1-q^{-2}}+\frac{q^{-2(m-r-s)+2}-1}{(1-q^2)^2}\right)\cdot \mathcal{F}_1
    \end{split}
\end{equation*}
\textbf{Case 10):}
\begin{equation*}
    \begin{split}
        &\sum_{k=0}^{r-2}\sum_{l=0}^{r-2-k}q^{2l}\cdot h\left((cdh)^{r}(bfg)^{s+1}(ceg)^{m-r-s-1}\right)\\
        =&\left(\frac{r-1}{1-q^2}+\frac{q^{2r}-q^2}{(1-q^2)^2}\right)\cdot h\left((cdh)^{r}(bfg)^{s+1}(ceg)^{m-r-s-1}\right)
    \end{split}
\end{equation*}
\textbf{Case 11):}
\begin{equation*}
    \begin{split}
        &\sum_{k=0}^{r-1}\sum_{l=0}^{s-1}q^{2r-2k-1}\sum_{i=0}^l(q-q^{-1})^iq^{2i}{l\choose i}\cdot h\left((cdh)^{r}(bfg)^{s-i}(ceg)^{m-r-s+i}\right)\\
        =&\frac{q^{2r-1}-q^{-1}}{1-q^{-2}}\sum_{i=0}^{s-1}(q-q^{-1})^iq^{2i}{s\choose i+1}\cdot h\left((cdh)^{r}(bfg)^{s-i}(ceg)^{m-r-s+i}\right)
    \end{split}
\end{equation*}
\textbf{Case 12):}
\begin{equation*}
    \begin{split}
        &\sum_{k=0}^{r-1}\sum_{l=0}^{m-r-s-1}q^{2l+2r-2k-2}\sum_{i=0}^s(q-q^{-1})^iq^{2i}{s\choose i}\cdot h\left((cdh)^{r}(bfg)^{s+1-i}(ceg)^{m-r-s+i-1}\right)\\
        =&\frac{(q^{2(m-s)}-q^{2r})(1-q^{-2r})}{(1-q^2)^2}\sum_{i=0}^s(q-q^{-1})^iq^{2i}{s\choose i}\cdot h\left((cdh)^{r}(bfg)^{s+1-i}(ceg)^{m-r-s+i-1}\right)
    \end{split}
\end{equation*}
\textbf{Case 13):}
\begin{equation*}
    \begin{split}
        &\sum_{k=0}^{s-2}\sum_{l=0}^{s-2-k}q^2\sum_{i=0}^l(q-q^{-1})^iq^{2i}{l\choose i}\cdot h\left((cdh)^r(bfg)^{s-1-i}(ceg)^{m-r-s+1+i}\right)\\
        =&q^2\sum_{j=1}^{s-1}(q-q^{-1})^{j-1}q^{2j-2}{s\choose j+1}\cdot h\left((cdh)^r(bfg)^{s-j}(ceg)^{m-r-s+j}\right)
    \end{split}
\end{equation*}
\textbf{Case 14):}
\begin{equation*}
    \begin{split}
        &\sum_{k=0}^{s-1}\sum_{l=0}^{m-r-s-1}q^{2l+1}\sum_{i=0}^{s-k-1}(q-q^{-1})^iq^{2i}{s-k-1\choose i}\cdot h\left((cdh)^r(bfg)^{s-i}(ceg)^{m-r-s+i}\right)\\
        =&\frac{q-q^{2(m-r-s)+1}}{1-q^2}\sum_{i=0}^{s-1}(q-q^{-1})^iq^{2i}{s\choose i+1}\cdot h\left((cdh)^r(bfg)^{s-i}(ceg)^{m-r-s+i}\right)\\
    \end{split}
\end{equation*}
\textbf{Case 15):}
\begin{equation*}
    \begin{split}
        &\sum_{k=0}^{m-r-s-2}\sum_{l=0}^{m-r-s-2-k}q^{2l}\cdot h\left((cdh)^r(bfg)^{s+1}(ceg)^{m-r-s-1}\right)\\
        =&\left(\frac{m-r-s-1}{1-q^2}+\frac{q^{2(m-r-s)}-q^2}{(1-q^2)^2}\right)\cdot h\left((cdh)^r(bfg)^{s+1}(ceg)^{m-r-s-1}\right)
    \end{split}
\end{equation*}

\subsection{Recursive relation for the general situation}
First, we discuss the general situation where $s\ge 2$, $r\ge 2$, and $m-r-s\ge 2$. In this situation, all 15 cases appear in the linear relation. \\
\\
The term $(cdh)^r(bfg)^{s+1}(ceg)^{m-r-s-1}$ appears in case 1), 3), 4), 6), 9), 10), 12), and 15). Summing the contributions from these cases, the coefficient of $(cdh)^r(bfg)^{s+1}(ceg)^{m-r-s-1}$ is:
\begin{equation*}
    \begin{split}
        &\frac{q^2(q^{m-s}-q^{s-m})^2}{(1-q^2)^2}
    \end{split}
\end{equation*}
The term $(cdh)^r(bfg)^{s}(ceg)^{m-r-s}$ appears in case 1), 3), 4), 5), 6), 8), 9), 11), 12), 14), and the right-hand-side of linear relation derived from equation basis $(cdh)^r(bfg)^{s}(ceg)^{m-r-s}$ with comparing basis $(aek)^{m-1}bdk$. Summing the contributions from these cases, the coefficient of \\ $(cdh)^r(bfg)^{s}(ceg)^{m-r-s}$ is:
\begin{equation*}
    \begin{split}
        &\frac{q+sq^{2m-2s+3}-(s+1)q^{-2m+2s+1}}{q^2-1}
    \end{split}
\end{equation*}
The term $(cdh)^r(bfg)^{s-1}(ceg)^{m-r-s+1}$ appears in case 2), 5), 6), 7), 8), 11), 12), 13), and 14). Notice that if we combine the contribution of case 2) and case 7), we get:
\begin{equation*}
    s+\frac{s(s-1)}{2}=\frac{s(s+1)}{2}={s+1\choose 2}
\end{equation*}
which corresponds to $j=1$ in the summation of case 7). Thus, we can treat $(cdh)^r(bfg)^{s-1}(ceg)^{m-r-s+1}$ in the same way as the general case $(cdh)^r(bfg)^{s-i}(ceg)^{m-r-s+i}$, $2\le i\le s-1$ which appears in case 5), 6), 7), 8), 11), 12), 13), and 14). Summing the contributions from these cases, the coefficient of $(cdh)^r(bfg)^{s-i}(ceg)^{m-r-s+i}$, $1\le i\le s-1$, is:
\begin{equation*}
    \begin{split}
        &(q^{-1}-q)^{i-1}{s+1\choose i+1}q^{-2m+2s}+(q-q^{-1})^{i-1}q^{2i-2}{s\choose i+1}q^{2m-2s+4}
    \end{split}
\end{equation*}
Notice that is we put $i=0$ in the above coefficient, we get:
\begin{equation*}
    \begin{split}
        \frac{(s+1)q^{-2m+2s}}{q^{-1}-q}+\frac{sq^{2m-2s+2}}{q-q^{-1}}=\frac{sq^{2m-2s+3}-(s+1)q^{-2m+2s+1}}{q^2-1}
    \end{split}
\end{equation*}
The term $(cdh)^r(ceg)^{m-r}$ appears in case 5), 6), 7), and 8). Summing the contributions from these cases, the coefficient of $(cdh)^r(ceg)^{m-r}$ is:
\begin{equation*}
    \begin{split}
        &(q^{-1}-q)^{s-1}q^{-2m+2s}
    \end{split}
\end{equation*}
Thus, the recursive relation for the general case is:
\begin{equation}
    \begin{split}
        &\frac{q^2(q^{m-s}-q^{s-m})^2}{(1-q^2)^2}\cdot h\left((cdh)^r(bfg)^{s+1}(ceg)^{m-r-s-1}\right)\\
        =&-\frac{q}{q^2-1}\cdot h\left((cdh)^r(bfg)^{s}(ceg)^{m-r-s}\right)\\
        &-\sum_{i=0}^{s-1}a_i\cdot h\left((cdh)^r(bfg)^{s-i}(ceg)^{m-r-s+i}\right)\\
        &-(q^{-1}-q)^{s-1}q^{-2m+2s}\cdot h\left((cdh)^r(ceg)^{m-r}\right)
    \end{split} \label{recursive:2}
\end{equation}
where
\begin{equation*}
    a_i=(q^{-1}-q)^{i-1}{s+1\choose i+1}q^{-2m+2s}+(q-q^{-1})^{i-1}q^{2i-2}{s\choose i+1}q^{2m-2s+4}
\end{equation*}
\subsection{Recursive relation for special situations}
Recall that the recursive relation is valid for $r\ge 1$ and $m-r-s\ge 1$.
\subsubsection{$s\ge 2$, $r=1$ and/or $m-r-s= 1$}
When $r=1$, case 4) and 10) cannot happen; when $m-r-s= 1$, case 9) and 15) cannot happen. If we substitute $=1$ into the contribution of case 4) and 10), we find that the contribution for these two cases is automatically zero. The same situation happens when we substitute $m-r-s= 1$ into the contribution of case 9) and 15). Hence, we conclude that the recursive relation Equation (\ref{recursive:2}) is still valid for the situation when $s\ge 2$, $r=1$ and/or $m-r-s= 1$.
\\
\subsubsection{$s=1$}
When $s=1$, case 7) and 13) cannot happen. If we apply the convention that when $a<b$ then ${a\choose b}=0$, then the contribution of case 7) and 13) is automatically zero when we substitute $s=1$. Hence, we conclude that the recursive relation Equation (\ref{recursive:2}) is still valid for the situation when $s=1$.
\subsubsection{$s=0$}
When $s=0$, only case 1), 3), 4), 6), 9), 10), 12), and 15) appears. The terms appearing the linear relation are $h\left((cdh)^r(bfg)(ceg)^{m-r-1}\right)$ and $h\left((cdh)^r(ceg)^{m-r}\right)$.\\
\\
Summing the contributions from these cases, the coefficient of $h\left((cdh)^r(bfg)(ceg)^{m-r-1}\right)$ is:
\begin{equation*}
    \begin{split}
        &\frac{q^2(q^m-q^{-m})^2}{(q^2-1)^2}
    \end{split}
\end{equation*}
The coefficient of $h\left((cdh)^r(ceg)^{m-r}\right)$ is:
\begin{equation*}
    \begin{split}
       &\frac{q-q^{1-2m}}{q^2-1}
    \end{split}
\end{equation*}
Thus, the corresponding recursive relation is:
\begin{equation*}
    \begin{split}
        &\frac{q^2(q^m-q^{-m})^2}{(q^2-1)^2}*h\left((cdh)^r(bfg)(ceg)^{m-r-1}\right)\\
        =&-\frac{q-q^{1-2m}}{q^2-1}*h\left((cdh)^r(ceg)^{m-r}\right)
    \end{split}
\end{equation*}
Notice that this is consistent with the recursive relation Equation (\ref{recursive:2}) for the general case when we substitute $s=0$. 
\section{The Haar state of standard monomials with segment $afh$ or $bdk$}\label{one_high_complex}
In this section, we comoute the Haar state of standard monomials in the form of $(afh)^w(bfg)^s(cdh)^r(ceg)^{m-w-r-s}$ or $(bdk)^w(bfg)^s(cdh)^r(ceg)^{m-w-r-s}$. If we apply the double flip anti-homomorphism $\omega$ to $(afh)^w(bfg)^s(cdh)^r(ceg)^{m-w-r-s}$, we get:
\begin{equation*}
    \begin{split}
        \omega\left((afh)^w(bfg)^s(cdh)^r(ceg)^{m-w-r-s}\right)&=(ceg)^{m-w-r-s}(cdh)^s(bfg)^r(bdk)^w\\
        &=(bfg)^r(cdh)^s(ceg)^{m-w-r-s}(bdk)^w
    \end{split}
\end{equation*}
Evaluating the Haar state on both sides and applying the modular automorphism, we get:
\begin{equation*}
    \begin{split}
        &h\left((afh)^w(bfg)^s(cdh)^r(ceg)^{m-w-r-s}\right)\\
        =&h\circ\omega\left((afh)^w(bfg)^s(cdh)^r(ceg)^{m-w-r-s}\right)\\
        =&h\left((bfg)^r(cdh)^s(ceg)^{m-w-r-s}(bdk)^w\right)\\
        =&h\left((bdk)^w\eta\left((bfg)^r(cdh)^s(ceg)^{m-w-r-s}\right)\right)\\
        =&h\left((bdk)^w(bfg)^r(cdh)^s(ceg)^{m-w-r-s}\right)
    \end{split}
\end{equation*}
Thus, we will develop an algorithm to compute the Haar state of\\ $(afh)^w(bfg)^s(cdh)^r(ceg)^{m-w-r-s}$ and the Haar state of \\$(bdk)^w(bfg)^r(cdh)^s(ceg)^{m-w-r-s}$ can be found by the equation above.

\subsection{Standard monomials in the form of\\ $(afh)^w(bfg)^s(cdh)^r(ceg)^{m-w-r-s}$ with ${m-w-r-s}\ge 1$}\label{case_with_ceg}
Before we introduce the algorithm, observe that:
\begin{equation*}
    \begin{split}
        ceg(afh)^k=q^{2k}*(afh)^kceg+\sum_{i=0}^{k-1}q^{2i}(1-q^2)*(afh)^ibfgcdh(afh)^{k-1-i}
    \end{split}
\end{equation*}
Using the same trick as in Section \ref{recuresive_cdhbfg} Equation (\ref{eq:re}), we get:
\begin{equation}
    (1-q^{2k})\cdot h\left((afh)^kceg\right)=\sum_{i=0}^{k-1}q^{2i}(1-q^2)\cdot h\left((afh)^ibfgcdh(afh)^{k-1-i}\right) \label{eq:2'}
\end{equation}
Notice that every monomial in the right-hand-side of Equation (\ref{eq:2'}) contains $k-1$ generator $a$ and no generator $k$. Thus, Theorem 1 e) tells us that the right-hand-side can be decomposed into a linear combination of standard monomials with at most $k-1$ generator $a$ and no generator $k$. This implies we can design a recursive algorithm on the number of $afh$ segments to compute the Haar state of $(afh)^w(cdh)^r(bfg)^s(ceg)^{m-w-r-s}$. 

\subsection{Standard monomials in the form of $(afh)^w(cdh)^r(bfg)^s$}\label{case_without_ceg}
 The strategy introduced in the subsection \ref{case_with_ceg} does not work for standard monomials in the form of $(afh)^w(cdh)^r(bfg)^s$ ($m-w-r-s=0$). In this case, we will use the linear relation derived from equation basis $(afh)^{w-1}(cdh)^{r+1}(bfg)^{m-r-w}$ with comparing basis $(aek)^{m-1}bdk$ to compute the Haar state of $(afh)^w(cdh)^r(bfg)^s$. For the detail of this construction, see Lu~\cite{lu2023} Section 2.3. In the following, we will show that if the Haar states of standard monomials in the form of $(afh)^w(cdh)^r(bfg)^s(ceg)^{m-w-r-s}$ with $m-w-r-s\ge 1$ and standard monomials with the number of $afh$ segments less than $w$ are known then we can compute the Haar state of $(afh)^w(cdh)^r(bfg)^s$ from this linear relation.\\
\\
Since the comparing basis is $(aek)^{m-1}bdk$, we have to consider left components in the following form:
\begin{enumerate}
    \item[1)] $(aek)^lbdk(aek)^{m-1-l}$
    \item[2)] $(aek)^lbek(aek)^kadk(aek)^{m-2-l-k}$
    \item[3)] $(aek)^ladk(aek)^kbek(aek)^{m-2-l-k}$ 
\end{enumerate} 
 When the left component is in the form of $(aek)^ibdk(aek)^{m-i-1}$, the corresponding right components are:
\begin{enumerate}
    \item[1)] $(afh)^ldch(afh)^{w-2-l}(cdh)^{r+1}(bfg)^{m-r-w}$ 
    \item[2)] $(afh)^{w-1}(cdh)^lfah(cdh)^{r-l}(bfg)^{m-r-w}$ 
    \item[3)] $(afh)^{w-1}(cdh)^{r+1}(bfg)^lecg(bfg)^{m-r-w-l-1}$ 
\end{enumerate}

\hfill

\noindent When the left component is in the form of $(aek)^kbek(aek)^ladk(aek)^{m-k-l-2}$, the corresponding right components are:
\begin{enumerate}
    \item[4)] $(afh)^ldfh(afh)^kach(afh)^{w-3-l-k}(cdh)^{r+1}(bfg)^{m-r-w}$ 
    \item[5)] $(afh)^ldfh(afh)^{w-2-l}(cdh)^kcah(cdh)^{r-k}(bfg)^{m-r-w}$ 
    \item[6)] $(afh)^ldfh(afh)^{w-2-l}(cdh)^{r+1}(bfg)^kbcg(bfg)^{m-r-w-k-1}$ 
    \item[7)] $(afh)^{w-1}(cdh)^lfdh(cdh)^kcah(cdh)^{r-l-k-1}(bfg)^{m-r-w}$ 
    \item[8)] $(afh)^{w-1}(cdh)^lfdh(cdh)^{r-l}(bfg)^kbcg(bfg)^{m-r-w-k-1}$ 
    \item[9)] $(afh)^{w-1}(cdh)^{r+1}(bfg)^lefg(bfg)^kbcg(bfg)^{m-r-w-l-k-2}$ 
\end{enumerate}

\hfill

\noindent When the left component is in the form of $(aek)^kadk(aek)^lbek(aek)^{m-k-l-2}$, the corresponding right components are:
\begin{enumerate}
    \item[10)] $(afh)^kach(afh)^ldfh(afh)^{w-3-l-k}(cdh)^{r+1}(bfg)^{m-r-w}$  
    \item[11)] $(afh)^kach(afh)^{w-2-k}(cdh)^lfdh(cdh)^{r-l}(bfg)^{m-r-w}$  
    \item[12)] $(afh)^kach(afh)^{w-2-k}(cdh)^{r+1}(bfg)^lefg(bfg)^{m-r-w-1-l}$  
    \item[13)] $(afh)^{w-1}(cdh)^kcah(cdh)^lfdh(cdh)^{r-1-k-l}(bfg)^{m-r-w}$ 
    \item[14)] $(afh)^{w-1}(cdh)^lcah(cdh)^{r-l}(bfg)^kefg(bfg)^{m-r-w-k-1}$ 
    \item[15)] $(afh)^{w-1}(cdh)^{r+1}(bfg)^kbcg(bfg)^lefg(bfg)^{m-r-w-2-k-l}$ 
\end{enumerate}
By counting the number of generator $a$ in these monomials, we know that case 1), 3), 4) to 6), 8), 9), 10) to 12), and 15) can be decomposed into a linear combination of standard monomials with at most $w-1$ $afh$ segments and we can compute their Haar states. The exceptions are case 2), 7), 13) and 14).

\hfill

\noindent For case 2), we have the following decomposition:
\begin{equation*}
    \begin{split}
        &(afh)^{w-1}(cdh)^lfah(cdh)^{r-l}(bfg)^{m-r-w}\\
        =&(afh)^{w-1}(cdh)^lafh(cdh)^{r-l}(bfg)^{m-r-w}\\
        &-(q-q^{-1})*(afh)^{w-1}(cdh)^{r+1}(bfg)^{m-r-w}\\
        =&(afh)^{w}(cdh)^{r}(bfg)^{m-r-w}\\
        &+(q^3-q)*\sum_{i=0}^{l-1}(afh)^{w-1}(cdh)^{l-1-i}afh(cdh)^{r-l+i}(bfg)^{m-r-w}ceg\\
        &-(q^2-q)l*(afh)^{w-1}(cdh)^{r}(bfg)^{m-r-w+1}\\
        &-(q-q^{-1})*(afh)^{w-1}(cdh)^{r+1}(bfg)^{m-r-w}.\\
    \end{split}
\end{equation*}
In the above equation, by applying Theorem 1 (e) to $(afh)^{w-1}(cdh)^{l-1-i}afh(cdh)^{r-l+i}(bfg)^{m-r-w}ceg$ we conclude that the decomposition of monomials in this form only contains standard monomials with at most $w$ $afh$ segments and at least one $ceg$ segment. Then, by our assumption, we can compute the Haar state of monomials in this form. Therefore, among all standard monomials appearing in the decomposition of case 2), only the Haar state of $(afh)^{w}(cdh)^{r}(bfg)^{m-r-w}$ is unknown. case 7) and 13) can be transformed into case 2) without generating new monomials. Hence, the conclusion of case 7) and 13) is the same as that of case 2).

\hfill

\noindent For case 14), there are $w$ generator $a$, $r+1$ generator $c$ and $m-r-w$ generator $g$ in these monomials. Thus, if we decompose these monomials into linear combinations of standard monomials, the standard monomials appearing in the linear combination contain at most $w$ generator $a$ and at least $r+1$ generator $c$ and at least $m-r-w$ generator $g$. Especially, there must be at least one $ceg$ segment in these standard monomials. By our assumption, the Haar states of these types of monomials are known. Hence, we can compute the Haar state of monomials in case 14).\\
\\
We conclude that $(afh)^w(cdh)^r(bfg)^{m-r-w}$ is the only standard monomial appearing in the linear relation whose Haar state is unknown. Next, we compute the coefficient of $(afh)^w(cdh)^r(bfg)^s(ceg)^{m-w-r-s}$ in the final linear relation. First assume that $r\ge 1$. The contribution of case 2) to the coefficient of \\$(afh)^w(cdh)^r(bfg)^s(ceg)^{m-w-r-s}$ in the final linear relation is:
$$r+1.$$
The contribution of case 7) to the coefficient of \\$(afh)^w(cdh)^r(bfg)^s(ceg)^{m-w-r-s}$ in the final linear relation is:
\begin{equation*}
    \begin{split}
        \sum_{l=0}^{r-1}\sum_{k=0}^{r-1-l}q^{-2k-2}=\sum_{l=0}^{r-1}\frac{q^{-2}-q^{-2(r-l)-2}}{1-q^{-2}}=\frac{rq^{-2}}{1-q^{-2}}-\frac{q^{-2r-2}}{1-q^{-2}}\frac{1-q^{2r}}{1-q^2}
    \end{split}
\end{equation*} 
The contribution of case 13) to the coefficient of \\$(afh)^w(cdh)^r(bfg)^s(ceg)^{m-w-r-s}$ in the final linear relation is:
\begin{equation*}
    \begin{split}
        &\sum_{l=0}^{r-1}\sum_{k=0}^{r-1-l}q^{2k}=\sum_{l=0}^{r-1}\frac{1-q^{2(r-l)}}{1-q^2}=\frac{r}{1-q^2}-\frac{q^{2r}}{1-q^2}\frac{1-q^{-2r}}{1-q^{-2}}
    \end{split}
\end{equation*}
Together, the coefficient of $(afh)^w(cdh)^r(bfg)^s(ceg)^{m-w-r-s}$ in the final linear relation is:
\begin{equation*}
    \begin{split}
    \frac{q^2(q^{-r-1}-q^{r+1})^2}{(1-q^2)^2}
    \end{split}
\end{equation*}
Then, in the case of $r=0$, case 7) and 13) cannot happen. The contribution from case 2) is $1$ and hence the coefficient of $(afh)^w(cdh)^r(bfg)^s(ceg)^{m-w-r-s}$ is $1$ in this situation which coincide with the result obtained by substituting $r=1$ into the coefficient of case $r\ge 1$. 
Since the coefficient is not identically zero for $r\ge 0$, we can compute the Haar state of $(afh)^w(cdh)^r(bfg)^{m-r-w}$, $r\ge 0$, from the linear relation.\\

\subsection{Recursive algorithm}
 Assume that we have computed the Haar state of standard monomials without high-complexity segment and standard monomials with $afh$ as the only high-complexity segments and the number of $afh$ segments does not exceed $w-1$. The recursive algorithm to compute the Haar state of standard monomials in the form of $(afh)^w(bfg)^s(cdh)^r(ceg)^{m-w-r-s}$ is:
 \begin{itemize}
     \item[i)] Compute the Haar state of monomials in the form of \\$(afh)^w(bfg)^s(cdh)^r(ceg)^{m-w-r-s}$ with $m-w-r-s\ge 1$ using Equation (\ref{eq:2'})
     \item[ii)] Compute the Haar state of monomials in the form of $(afh)^w(cdh)^r(bfg)^{m-r-w}$ using the linear relation derived from equation basis $(afh)^{w-1}(cdh)^{r+1}(bfg)^{m-r-w}$ with comparing basis $(aek)^{m-1}bdk$
 \end{itemize}
\section{The Haar state of standard monomials in general forms}
In this section, we provide an algorithm to compute the Haar state of standard monomials in the form of $(aek)^u(afh)^v(bdk)^w(bfg)^s(cdh)^r(ceg)^{m-u-v-w-r-s}$. We assume that the Haar states of standard monomials of order less or equal to $m-1$ are known. To simplify the argument, we will provide an algorithm to compute the Haar state of the general form of standard monomials regardless of whether the standard monomial is a monomial basis or not. The strategy is an induction on the number of high complexity segments in the general form of standard monomials. 
\subsection{Base case}\label{base_case}
The base case includes standard monomials in the form of $aek(bfg)^s(cdh)^r(ceg)^{m-1-r-s}$, $afh(bfg)^s(cdh)^r(ceg)^{m-1-r-s}$, and $bdk(bfg)^s(cdh)^r(ceg)^{m-1-r-s}$. The Haar state of $afh(bfg)^s(cdh)^r(ceg)^{m-1-r-s}$ and $bdk(bfg)^s(cdh)^r(ceg)^{m-1-r-s}$ are solved in Section \ref{one_high_complex}. The Haar state of $aek(bfg)^s(cdh)^r(ceg)^{m-1-r-s}$ is solved by the following identity:
$$h\left((bfg)^s(cdh)^r(ceg)^{m-1-r-s}\right)=h\left(D_q*(bfg)^s(cdh)^r(ceg)^{m-1-r-s}\right)$$
The left-hand-side is a monomial of order $m-1$ and by assumption, we know its Haar state. The right-hand-side is a linear combination of monomials of order $m$. Among these monomials, the only monomial with unknown Haar state value is $aek(bfg)^s(cdh)^r(ceg)^{m-1-r-s}$. Thus, we can find the Haar state of $aek(bfg)^s(cdh)^r(ceg)^{m-1-r-s}$ from this identity. Then, we find the Haar states for all standard monomials with one high complexity segment.

\subsection{Inductive steps}
Now, assume that we know the Haar state of all standard monomials in the form of $(aek)^u(afh)^v(bdk)^w(bfg)^s(cdh)^r(ceg)^{m-u-v-w-r-s}$ with $u+v+w\le n-1$. The Haar state of standard monomials in the form of\\ $(afh)^v(bfg)^s(cdh)^r(ceg)^{m-v-r-s}$ or $(bdk)^w(bfg)^s(cdh)^r(ceg)^{m-w-r-s}$ are solved in the Section \ref{base_case}.

\subsubsection{Monomials containing $ceg$ segments and no $aek$ segment}
To compute the Haar state of standard monomials with $n$ high complexity segments, we start with monomials in the form of \\$(afh)^v(bdk)^w(bfg)^s(cdh)^r(ceg)^{m-v-w-r-s}$ with $v,w\ge 1$ and $m-v-w-r-s\ge 1$. Since the counting matrix of monomial $afhbdkceg$ and $aekbfgcdh$ are the same, we can decompose $(afh)^v(bdk)^w(bfg)^s(cdh)^r(ceg)^{m-v-w-r-s}$ into a linear combination of standard monomials with at most $n-1$ high complexity segments in the following way:
\begin{enumerate}
    \item[1)] Rewrite $(afh)^v(bdk)^w(bfg)^s(cdh)^r(ceg)^{m-v-w-r-s}$ as a linear combination of $(afh)^{v-1}(bdk)^{w-1}\left[afhbdkceg\right](bfg)^s(cdh)^r(ceg)^{m-v-w-r-s-1}$ and other standard monomials with at most $n-1$ high complexity segments.
    \item[2)] Apply equation:
    \begin{equation}\label{afhbdkceg_equ}
        \begin{split}
            afhbdkceg =& q*aekbfgcdh+(1 - q^2)*aekbfgceg\\
        &+(1 - q^2)*aekcdhceg+(q^2 - 1)^2/q*aek(ceg)^2\\
        &+(1 - q^2)*afhbfgcdh+(q^3 - q)*afhbfgceg\\
        &+(q^3 - q)*afhcdhceg-(q^2 - 1)^2*afh(ceg)^2
        \end{split}
    \end{equation}
    to $(afh)^{v-1}(bdk)^{w-1}\left[afhbdkceg\right](bfg)^s(cdh)^r(ceg)^{m-v-w-r-s-1}$. Notice that each monomial on the right-hand-side contains only one high complexity segment. This means that $(afh)^v(bdk)^w(bfg)^s(cdh)^r(ceg)^{m-v-w-r-s}$ can be written as a linear combination of monomials with at most $n-1$ high complexity segments.
    \item[3)] Decompose these monomials with at most $n-1$ high complexity segments in to linear combinations of standard monomials. The standard monomials appearing in these decompositions contain at most $n-1$ high complexity segments as well. 
\end{enumerate}
Thus, by our assumption, we can compute the Haar state of standard monomials in the form of $(afh)^v(bdk)^w(bfg)^s(cdh)^r(ceg)^{m-v-w-r-s}$ with $v,w\ge 1$ and $m-v-w-r-s\ge 1$.

\subsubsection{Monomials without $ceg$ and $aek$ segment}
In this subsection, we compute the Haar state of monomials with $m-v-w-r-s=0$, i.e., monomials in the form of $(afh)^v(bdk)^w(bfg)^s(cdh)^{m-v-w-s}$ with $v,w\ge 1$. We will use the linear relation derived from equation basis \\$(afh)^v(bdk)^{w-1}(bfg)^{s+1}(cdh)^{m-v-w-s}$ and comparing basis $(aek)^{m-1}afh$. For the detail of this construction, see Lu~\cite{lu2023} Section 2.3.\\
\\
When the left components are in the form of $(aek)^{l}afh(aek)^{m-l-1}$, the corresponding right components are:
\begin{enumerate}
    \item[1)] $(afh)^lake(afh)^{v-1-l}(bdk)^{w-1}(bfg)^{s+1}(cdh)^{m-v-w-s}$ 
    \item[2)]  $(afh)^v(bdk)^lbgf(bdk)^{w-l-2}(bfg)^{s+1}(cdh)^{m-v-w-s}$
    \item[3)]  $(afh)^v(bdk)^{w-1}(bfg)^lbkd(bfg)^{s-l}(cdh)^{m-v-w-s}$
    \item[4)]  $(afh)^v(bdk)^{w-1}(bfg)^{s+1}(cdh)^lcge(cdh)^{m-v-w-s-1-l}$
\end{enumerate}
When the left components are in the form of $(aek)^kafk(aek)^laeh(aek)^{m-1-l-k}$, the corresponding right components are:
\begin{enumerate}
    \item[5)] $(afh)^kakh(afh)^lafe(afh)^{v-k-l-2}(bdk)^{w-1}(bfg)^{s+1}(cdh)^{m-v-w-s}$ 
    \item[6)] $(afh)^kakh(afh)^{v-1-k}(bdk)^lbdf(bdk)^{w-2-l}(bfg)^{s+1}(cdh)^{m-v-w-s}$
    \item[7)] $(afh)^kakh(afh)^{v-1-k}(bdk)^{w-1}(bfg)^lbfd(bfg)^{s-l}(cdh)^{m-v-w-s}$
    \item[8)] $(afh)^kakh(afh)^{v-1-k}(bdk)^{w-1}(bfg)^{s+1}(cdh)^lcde(cdh)^{m-v-w-s-1-l}$
    \item[9)] $(afh)^v(bdk)^kbgk(bdk)^lbdf(bdk)^{w-3-k-l}(bfg)^{s+1}(cdh)^{m-v-w-s}$ 
    \item[10)] $(afh)^v(bdk)^kbgk(bdk)^{w-2-k}(bfg)^lbfd(bfg)^{s-l}(cdh)^{m-v-w-s}$  
    \item[11)] $(afh)^v(bdk)^kbgk(bdk)^{w-2-k}(bfg)^{s+1}(cdh)^lcde(cdh)^{m-v-w-s-1-l}$
    \item[12)] $(afh)^v(bdk)^{w-1}(bfg)^kbkg(bfg)^lbfd(bfg)^{s-1-k-l}(cdh)^{m-v-w-s}$ 
    \item[13)] $(afh)^v(bdk)^{w-1}(bfg)^kbkg(bfg)^{s-k}(cdh)^lcde(cdh)^{m-v-w-s-1-l}$ 
    \item[14)] $(afh)^v(bdk)^{w-1}(bfg)^{s+1}(cdh)^kcgh(cdh)^lcde(cdh)^{m-v-w-s-2-k-l}$
\end{enumerate}
When the left components are in the form of $(aek)^kaeh(aek)^lafk(aek)^{m-1-l-k}$, the corresponding right components are:
\begin{enumerate}
    \item[15)] $(afh)^kafe(afh)^lakh(afh)^{v-k-l-2}(bdk)^{w-1}(bfg)^{s+1}(cdh)^{m-v-w-s}$ 
    \item[16)] $(afh)^kafe(afh)^{v-1-k}(bdk)^lbgk(bdk)^{w-2-l}(bfg)^{s+1}(cdh)^{m-v-w-s}$
    \item[17)] $(afh)^kafe(afh)^{v-1-k}(bdk)^{w-1}(bfg)^lbkg(bfg)^{s-l}(cdh)^{m-v-w-s}$
    \item[18)] $(afh)^kafe(afh)^{v-1-k}(bdk)^{w-1}(bfg)^{s+1}(cdh)^lcgh(cdh)^{m-v-w-s-1-l}$
    \item[19)] $(afh)^v(bdk)^kbdf(bdk)^lbgk(bdk)^{w-3-k-l}(bfg)^{s+1}(cdh)^{m-v-w-s}$ 
    \item[20)] $(afh)^v(bdk)^kbdf(bdk)^{w-2-k}(bfg)^lbkg(bfg)^{s-l}(cdh)^{m-v-w-s}$  
    \item[21)] $(afh)^v(bdk)^kbdf(bdk)^{w-2-k}(bfg)^{s+1}(cdh)^lcgh(cdh)^{m-v-w-s-1-l}$
    \item[22)] $(afh)^v(bdk)^{w-1}(bfg)^kbfd(bfg)^lbkg(bfg)^{s-1-k-l}(cdh)^{m-v-w-s}$ 
    \item[23)] $(afh)^v(bdk)^{w-1}(bfg)^kbfd(bfg)^{s-k}(cdh)^lcgh(cdh)^{m-v-w-s-1-l}$ 
    \item[24)] $(afh)^v(bdk)^{w-1}(bfg)^{s+1}(cdh)^kcde(cdh)^lcgh(cdh)^{m-v-w-s-2-k-l}$
\end{enumerate}
\paragraph{Analysis:}
The analysis of the 24 cases consists of 4 parts:
\begin{enumerate}
    \item[1)] Case 1), 5), 6), 15), and 16).
    \item[2)] Case 2), 4), 9), 10), 11), 14), 18), 19), 20), 21), 23) and 24).
    \item[3)] Case 3), 12), and 22).
    \item[4)] Case 7), 8), 13), and 17).
\end{enumerate}
We can decompose monomials of case 1) in the following way:
\begin{enumerate}
    \item[1)] Decompose $(afh)^lake(afh)^{v-1-l}(bdk)^{w-1}$ as a linear combination of standard monomials of order $v+w-1$. 
    \item[2)] Concatenate $(bfg)^s(cdh)^{m-v-w-s}$ at the end of every standard monomial appearing in the decomposition of $(afh)^lake(afh)^{v-1-l}(bdk)^{w-1}$. 
\end{enumerate}
Since low complexity segments commute with each other, we get a decomposition of monomials in case 1) after reorder the segments in the monomials in step 2). Since step 2) will not increase the number of high complexity segments in the decomposition, we conclude that the decomposition of monomials in case 1) only contains standard monomials with at most $v+w-1$ high complexity segments. Applying a similar argument to monomials of case 5), 6), 15), and 16), we get the same conclusion as monomials of case 1).\\
\\
For case 2), 4), 9), 10), 11), 14), 18), 19), 20), 21), 23) and 24), the sums of the number of generator $a$ and the number of generator $k$ are no more than $v+w-1$. Since each high-complexity segment contains at least one of $a$ or $k$, by Theorem 1 (e) we know that decomposition of these cases contains only standard monomials with at most $v+w-1$ high-complexity segments.\\
\\
For case 3), we apply the same argument as for case 1) and focus on the partial monomial $(afh)^v(bdk)^{w-1}(bfg)^lbkd(bfg)^{s-l}$. The partial monomial can be decomposed as:
\begin{equation*}
    \begin{split}
        &(afh)^v(bdk)^{w-1}(bfg)^lbkd(bfg)^{s-l}\\
        =&(afh)^v(bdk)^{w-1}(bfg)^lbdk(bfg)^{s-l}\\
        &-(q-q^{-1})*(afh)^v(bdk)^{w-1}(bfg)^{s+1}\\
        =&(afh)^v(bdk)^{w}(bfg)^{s}\\
        &-(q-q^{-1})*\sum_{i=0}^{l-1}(afh)^v(bdk)^{w-1}(bfg)^{l-1-i}bdk(bfg)^{s-l+i}ceg\\
        &+(q-q^{-1})l*(afh)^v(bdk)^{w-1}(bfg)^{s}cdh\\
        &-(q-q^{-1})*(afh)^v(bdk)^{w-1}(bfg)^{s+1}.\\
    \end{split}
\end{equation*}
By Theorem 1 (e), the decomposition of $(afh)^v(bdk)^{w-1}(bfg)^{l-1-i}bdk(bfg)^{s-l+i}ceg$ contains only standard monomials with at most $v+w$ high-complexity segments and at least one $ceg$ segment. Then, by our assumption, we can compute the Haar state of monomials in this form. Therefore, among all standard monomials appearing in the decomposition of case 3), only the Haar state of $(afh)^v(bdk)^{w}(bfg)^{s}(cdh)^{m-v-w-s}$ is unknown. Case 12) and 22) can be transformed in to case 3) without generating new monomials. Thus, these cases are essentially the same as case 3).\\
\\
For case 7), using a similar strategy as case 1), we focus on the partial monomials $(afh)^kakh(afh)^{v-1-k}(bdk)^{w-1}(bfg)^lbfd(bfg)^{s-l}$. Notice that the counting matrix of this monomial belongs to $A_3(v+w+s)$. The monomial can be decomposed into a linear combination of standard monomials of order $v+w+s$. By counting the number of generator $g$, we know that the decomposition of the partial monomial only contains standard monomials with at least $s$ low complexity segments with generator $g$. By counting the number of generator $a$ and $k$, we know that the standard monomials appearing in the decomposition contains at most $v+w$ high complexity segments. Thus, the decomposition of case 7) contains $(afh)^v(bdk)^w(bfg)^s(cdh)^{m-v-w-s}$ and other standard monomials that contains either $v+w$ high complexity segments and at least one $ceg$ segment or strictly least than $v+w$ high complexity segments. For case 8) and 17), by counting the number of generator $c$ or $g$ in the monomials and applying a similar argument to case 7), we conclude that standard monomials appearing in the decomposition of these cases contains at most $v+w-1$ high complexity segments. Notice that in case 8), we can switch $(bfg)^{s+1}$ with $(cdh)^lcde(cdh)^{m-v-w-s-1-l}$ without generating new monomials. For case 13), we can transform the case into the form $(afh)^v(bdk)^{w-1}(bfg)^kbk(bfg)^{s-k}(cdh)^ld(cdh)^{m-v-w-s-1-l}ceg$ without generating new monomials. Then, by counting the number of generator $a$ and $k$, we conclude that the decomposition of case 13) only contains standard monomials with at most $v+w$ high complexity segments and at least $1$ $ceg$ segment.
\paragraph{Conclusion:}
By previous analysis, the linear relation obtained from equation basis $(afh)^v(bdk)^{w-1}(bfg)^{s+1}(cdh)^{m-v-w-s}$ and comparing basis $(aek)^{m-1}afh$ contains the standard monomial $(afh)^v(bdk)^w(bfg)^s(cdh)^{m-v-w-s}$ and other standard monomials consisting of either $v+w$ high complexity segments and at least one $ceg$ segment or strictly least than $v+w$ high complexity segments. By our assumption, $(afh)^v(bdk)^w(bfg)^s(cdh)^{m-v-w-s}$ is the only standard monomial with unknown Haar state appearing in the linear relation.\\
\\
Next, we compute the coefficient of $(afh)^v(bdk)^w(bfg)^s(cdh)^{m-v-w-s}$ in the final linear relation. First, consider the case $s\ge 1$. Case 3), 7), 12), 22) contributes to the coefficient of $(afh)^v(bdk)^w(bfg)^s(cdh)^{m-v-w-s}$. The contribution of case 3) to the coefficient of $(afh)^v(bdk)^w(bfg)^s(cdh)^{m-v-w-s}$ is:
\begin{equation*}
    s+1.
\end{equation*}
The contribution of case 7) to the coefficient of $(afh)^v(bdk)^w(bfg)^s(cdh)^{m-v-w-s}$ is:
\begin{equation*}
    \begin{split}
        \sum_{k=0}^{v-1}\sum_{l=0}^{s}q^{-2l-2v+2k}=q^{-2v}\frac{1-q^{2v}}{1-q^2}\frac{1-q^{-2(s+1)}}{1-q^{-2}}
    \end{split}
\end{equation*}
The contribution of case 12) to the coefficient of $(afh)^v(bdk)^w(bfg)^s(cdh)^{m-v-w-s}$ is:
\begin{equation*}
    \begin{split}
        \sum_{k=0}^{s-1}\sum_{l=0}^{s-1-k}q^{-2l-2}=\sum_{k=0}^{s-1}\frac{q^{-2}-q^{-2(s-k)-2}}{1-q^{-2}}=\frac{sq^{-2}}{1-q^{-2}}-\frac{q^{-2s-2}}{1-q^{-2}}\frac{1-q^{2s}}{1-q^2}
    \end{split}
\end{equation*}
The contribution of case 22) to the coefficient of $(afh)^v(bdk)^w(bfg)^s(cdh)^{m-v-w-s}$ is:
\begin{equation*}
    \begin{split}
        \sum_{k=0}^{s-1}\sum_{l=0}^{s-1-k}q^{2l}=\sum_{k=0}^{s-1}\frac{1-q^{2(s-k)}}{1-q^2}=\frac{s}{1-q^2}-\frac{q^{2s}}{1-q^2}\frac{1-q^{-2s}}{1-q^{-2}}
    \end{split}
\end{equation*}
Together, the coefficient of $(afh)^v(bdk)^w(bfg)^s(cdh)^{m-v-w-s}$ in the final linear relation is:
\begin{equation*}
    \begin{split}
        &\frac{(q^{-2(s+1)}-1)(q^{-2v+2}-q^{2s+4})}{(1-q^2)^2}
    \end{split}
\end{equation*}
When $s=0$, case 12) and 22) cannot happen. Notice that the contributions of case 12) and 22) in case $s\ge 1$ are automatically $0$ if we substitute $s=0$ into these expressions. Hence, the coefficient expression obtained in case $s\ge 1$ is still valid for case $s=0$. Since the coefficient of $(afh)^v(bdk)^w(bfg)^s(cdh)^{m-v-w-s}$ is not identically zero, we can use this linear relation to compute the Haar state of $(afh)^v(bdk)^w(bfg)^s(cdh)^{m-v-w-s}$. 

\subsubsection{Monomials containing $aek$ segments}\label{13.2.3}
Finally, consider the general form $(aek)^u(afh)^v(bdk)^w(bfg)^s(cdh)^r(ceg)^{m-u-v-w-r-s}$ with $u\ge 1$ and $u+v+w=n$. We will apply a nested inductive argument on $u$. \\
\\
\textbf{Base case: $u=1$}\\
Consider the equality:
\begin{equation*}
    \begin{split}
        &(afh)^v(bdk)^w(bfg)^s(cdh)^r(ceg)^{m-1-v-w-r-s}\\
        =&D_q*(afh)^v(bdk)^w(bfg)^s(cdh)^r(ceg)^{m-1-v-w-r-s}\\
        =&aek(afh)^v(bdk)^w(bfg)^s(cdh)^r(ceg)^{m-1-v-w-r-s}\\
        &-q*(afh)^{v+1}(bdk)^w(bfg)^s(cdh)^r(ceg)^{m-1-v-w-r-s}\\
        &-q*bdk(afh)^v(bdk)^w(bfg)^s(cdh)^r(ceg)^{m-1-v-w-r-s}\\
        &+q^2*bfg(afh)^v(bdk)^w(bfg)^s(cdh)^r(ceg)^{m-1-v-w-r-s}\\
        &+q^2*cdh(afh)^v(bdk)^w(bfg)^s(cdh)^r(ceg)^{m-1-v-w-r-s}\\
        &-q^3*ceg(afh)^v(bdk)^w(bfg)^s(cdh)^r(ceg)^{m-1-v-w-r-s}
    \end{split}
\end{equation*}
Evaluate the Haar state on both sides and apply the modular automorphism, we get:
\begin{equation*}
    \begin{split}
        &(afh)^v(bdk)^w(bfg)^s(cdh)^r(ceg)^{m-1-v-w-r-s}\\
        =&aek(afh)^v(bdk)^w(bfg)^s(cdh)^r(ceg)^{m-1-v-w-r-s}\\
        &-q*(afh)^{v+1}(bdk)^w(bfg)^s(cdh)^r(ceg)^{m-1-v-w-r-s}\\
        &-q*bdk\left[(afh)^v(bdk)^w(bfg)^s(cdh)^r(ceg)^{m-1-v-w-r-s}\right]\\
        &+q^2*(afh)^v(bdk)^w(bfg)^{s+1}(cdh)^r(ceg)^{m-1-v-w-r-s}\\
        &+q^2*(afh)^v(bdk)^w(bfg)^s(cdh)^{r+1}(ceg)^{m-1-v-w-r-s}\\
        &-q^3*(afh)^v(bdk)^w(bfg)^s(cdh)^r(ceg)^{m-v-w-r-s}
    \end{split}
\end{equation*}
The left-hand-side of the equation is a monomial of order $m-1$ and we know its Haar state. The monomial $bdk\left[(afh)^v(bdk)^w(bfg)^s(cdh)^r(ceg)^{m-1-v-w-r-s}\right]$ decompose into a linear combination of standard monomials with at most $v+w$ high complexity segments. By counting the number of generator $a$ and $k$, we know that if a standard monomial with $v+w$ high complexity segments appears in the decomposition of $bdk\left[(afh)^v(bdk)^w(bfg)^s(cdh)^r(ceg)^{m-1-v-w-r-s}\right]$, it must contain $v$ $afh$ segments and $w$ $bdk$ segments. Therefore, we can compute the Haar state of $bdk\left[(afh)^v(bdk)^w(bfg)^s(cdh)^r(ceg)^{m-1-v-w-r-s}\right]$. Thus, the only monomial with unknown Haar state appearing in the equation is $aek(afh)^v(bdk)^w(bfg)^s(cdh)^r(ceg)^{m-1-v-w-r-s}$ and we can compute its Haar state using the equation. This finishes the base case.\\
\\
\textbf{Inductive steps:} $u=l$\\
Now, assume that the Haar states of standard monomials with $u\le l-1$ are known. To compute the Haar state of case $u=l$, we use the following equation:
\begin{equation*}
    \begin{split}
        &(aek)^{l-1}(afh)^v(bdk)^w(bfg)^s(cdh)^r(ceg)^{m-l-v-w-r-s}\\
        =&(aek)^{l-1}*D_q*(afh)^v(bdk)^w(bfg)^s(cdh)^r(ceg)^{m-l-v-w-r-s}\\
        =&(aek)^{l}(afh)^v(bdk)^w(bfg)^s(cdh)^r(ceg)^{m-l-v-w-r-s}\\
        &-q*(aek)^{l-1}(afh)^{v+1}(bdk)^w(bfg)^s(cdh)^r(ceg)^{m-l-v-w-r-s}\\
        &-q*(aek)^{l-1}*bdk*(afh)^v(bdk)^w(bfg)^s(cdh)^r(ceg)^{m-l-v-w-r-s}\\
        &+q^2*(aek)^{l-1}*bfg*(afh)^v(bdk)^w(bfg)^s(cdh)^r(ceg)^{m-l-v-w-r-s}\\
        &+q^2*(aek)^{l-1}*cdh*(afh)^v(bdk)^w(bfg)^s(cdh)^r(ceg)^{m-l-v-w-r-s}\\
        &-q^3*(aek)^{l-1}*ceg*(afh)^v(bdk)^w(bfg)^s(cdh)^r(ceg)^{m-l-v-w-r-s}
    \end{split}
\end{equation*}
For the non-standard monomials in the right-hand-side, we focus on the partial monomials after the $(aek)^{l-1}$ part. Applying a similar argument as in the base case, we conclude that these partial monomials can be decomposed as a linear combination of standard monomials of order $m-l+1$ with at most $v+w+1$ high complexity segments and when there are $v+w+1$ high complexity segments in the standard monomial, it must be $v$ $afh$ segments and $w+1$ $bdk$ segments. After concatenating $(aek)^{l-1}$ to the left of each standard monomial of order $m-l+1$ appearing in the decomposition of these partial monomials, we obtain the decomposition of the non-standard monomials in the right-hand-side. Besides $(aek)^{l}(afh)^v(bdk)^w(bfg)^s(cdh)^r(ceg)^{m-l-v-w-r-s}$, all the standard monomials appearing in the right-hand-side of equation contains at most $n$ high complexity segments and at most $l-1$ $aek$ segments. Hence, $(aek)^{l}(afh)^v(bdk)^w(bfg)^s(cdh)^r(ceg)^{m-l-v-w-r-s}$ is the only monomial with unknown Haar state value appearing in the equation and we can compute its Haar state from the equation. This finishes the inductive steps. Therefore, we are able to compute the Haar state of all monomials with $0\le u\le n$. This also means we can compute the Haar state of all monomial with $n$ high complexity segments and it finishes the inductive argument in this section.\\
\\
We have computed the Haar states of all standard monomials of order $m$.

\subsection{Recursive algorithm}
Assume that we have known the Haar state of standard monomials in the form of $(aek)^u(afh)^v(bdk)^w(bfg)^s(cdh)^r(ceg)^{m-u-v-w-r-s}$ with $u+v+w\le t-1$. Based on the induction, the recursive algorithm to compute the Haar state of standard monomials in the same form with $u+v+w=t$ is:
\begin{enumerate}
    \item[1)] Compute the Haar state of standard monomial in the form of \\$(afh)^v(bdk)^w(bfg)^s(cdh)^r(ceg)^{m-v-w-r-s}$ with $v,w,m-v-w-r-s\ge 1$ and $v+w=t$ using Equation (\ref{afhbdkceg_equ}).
    \item[2)] Compute the Haar state of standard monomials in the form of \\ $(afh)^v(bdk)^w(bfg)^s(cdh)^{m-v-w-s}$ with $v,w\ge 1, v+w=t$ using the linear relation derived from equation basis $(afh)^v(bdk)^{w-1}(bfg)^{s+1}(cdh)^{m-v-w-s}$ and comparing basis $(aek)^{m-1}afh$. 
    \item[3)] Compute the Haar state of standard monomials in the form of \\$(aek)^u(afh)^v(bdk)^w(bfg)^s(cdh)^r(ceg)^{m-u-v-w-r-s}$ with $u\ge 1, u+v+w=t$ using the inductive method in subsection \ref{13.2.3}.
\end{enumerate}
\section{Numerical results}
In this section, we present the Haar state of all standard monomials of order less than or equal to $3$. The Haar states for standard monomials of order $4$ and $5$ are available upon request.

\subsection{Haar state of standard monomials of order 1}
\begin{equation*}
    \begin{split}
        h(aek)&=\frac{(1-q^2)^2}{(1-q^4)(1-q^6)}\\
        h(afk)=h(bdk)&=\frac{(-q)(1-q^2)^2}{(1-q^4)(1-q^6)}\\
        h(bfg)=h(cdh)&=\frac{(-q)^2(1-q^2)^2}{(1-q^4)(1-q^6)}\\
        h(ceg)&=\frac{(-q)^3(1-q^2)^2}{(1-q^4)(1-q^6)}
    \end{split}
\end{equation*}
\subsection{Haar state of standard monomials of order 2}
\begin{equation*}
    \begin{split}
        h(aekaek)&=\frac{2q^8+q^4+1}{(q^2 + 1)^2(q^4 + 1)(q^2 - q + 1)^2(q^2 + q + 1)^2}\\
        h(aekafh)=h(aekbdk)&=\frac{-q(q^8 - q^6 + q^4 + 1)}{(q^2 + 1)^2(q^4 + 1)(q^2 - q + 1)^2(q^2 + q + 1)^2}\\
        h(aekbfg)=h(aekcdh)&=\frac{-q^2(q^6 - q^4 - 1)}{(q^2 + 1)^2(q^4 + 1)(q^2 - q + 1)^2(q^2 + q + 1)^2}\\
        h(aekceg)&=\frac{-q^3}{(q^2 + 1)^2(q^2 - q + 1)^2(q^2 + q + 1)^2}\\
        h(afhafh)=h(bdkbdk)&=\frac{q^2(q^4+1)}{(q^2 + 1)^2(q^2 - q + 1)^2(q^2 + q + 1)^2}\\
        h(afhbdk)&=\frac{-q^2(q^6 - q^4 - 1)}{(q^2 + 1)^2(q^4 + 1)(q^2 - q + 1)^2(q^2 + q + 1)^2}\\
        h(afhbfg)=h(bdkcdh)&=\frac{-q^3}{(q^2 + 1)^2(q^2 - q + 1)^2(q^2 + q + 1)^2}\\
        h(afhcdh)=h(bdkbfg)&=\frac{-q^3}{(q^2 + 1)^2(q^2 - q + 1)^2(q^2 + q + 1)^2}\\
    \end{split}
\end{equation*}
\begin{equation*}
    \begin{split}
        h(afhceg)=h(bfgcdh)=h(bdkceg)&=\frac{q^4}{(q^2 + 1)^2(q^4 + 1)(q^2 - q + 1)^2(q^2 + q + 1)^2}\\
        h(bfgbfg)=h(cdhcdh)&=\frac{q^4}{(q^2 + 1)(q^2 - q + 1)^2(q^2 + q + 1)^2}\\
        h(bfgceg)=h(cdhceg)&=\frac{-q^5}{(q^2 + 1)(q^4 + 1)(q^2 - q + 1)^2(q^2 + q + 1)^2}\\
        h(cegceg)&=\frac{q^6}{(q^4 + 1)(q^2 - q + 1)^2(q^2 + q + 1)^2}
    \end{split}
\end{equation*}
\subsection{Haar state of standard monomials of order 3}
\begin{equation*}
    \begin{split}
        &h\left((aek)^3\right)=\\
        &\frac{(q^{20} + 6q^{16} - 6q^{14} + 13q^{12} - 6q^{10} + 9q^8 - 2q^6 + 3q^4 - q^2 + 1)(1-q^2)}{(q^2 + 1)^2(q^4 + 1)^2(q^2 - q + 1)^2(q^2 + q + 1)^2(1-q^{10})}\\
        \\
        &h\left((aek)^2afh\right)=h\left((aek)^2bdk\right)=\\
        &\frac{-q(q^{18} - 2q^{14} + 7q^{12} - 7q^{10} + 8q^8 - 4q^6 + 3q^4 - q^2 + 1)(1-q^2)}{(q^2 + 1)^2(q^4 + 1)^2(q^2 - q + 1)^2(q^2 + q + 1)^2(1-q^{10})}\\
        \\
        &h\left((aek)^2bfg\right)=h\left((aek)^2cdh\right)=\\
        &\frac{-q^2(q^{16} - q^{14} - q^{12} + 3q^{10} - 5q^8 + 4q^6 - 3q^4 + q^2 - 1)(1-q^2)}{(q^2 + 1)^2(q^4 + 1)^2(q^2 - q + 1)^2(q^2 + q + 1)^2(1-q^{10})}\\
        \\
        &h\left((aek)^2ceg\right)=\\
        &\frac{-q^3(q^{14} + q^{10} + 3q^8 - 2q^6 + 3q^4 - q^2 + 1)(1-q^2)}{(q^2 + 1)^2(q^4 + 1)^2(q^2 - q + 1)^2(q^2 + q + 1)^2(1-q^{10})}\\
        \\
        &h\left(aek(afh)^2\right)=h\left(aek(bdk)^2\right)=\\
        &\frac{q^2(q^{18} - 2q^{16} + 2q^{14} + q^{12} - 3q^{10} + 7q^8 - 4q^6 + 4q^4 - q^2 + 1)(1-q^2)}{(q^2 + 1)^2(q^4 + 1)^2(q^2 - q + 1)^2(q^2 + q + 1)^2(1-q^{10})}\\
        \\
        &h\left(aekafhbdk\right)=\\
        &\frac{-q^2(q^{16} - q^{14} - 2q^{12} + 4q^{10} - 6q^8 + 5q^6 - 3q^4 + q^2 - 1)(1-q^2)}{(q^2 + 1)^2(q^4 + 1)^2(q^2 - q + 1)^2(q^2 + q + 1)^2(1-q^{10})}\\
        \\
        &h\left(aekafhbfg\right)=h\left(aekafhcdh\right)=\\
        &\frac{-q^3(q^{14} - q^{12} + 2q^8 - 3q^6 + 3q^4 - q^2 + 1)(1-q^2)}{(q^2 + 1)^2(q^4 + 1)^2(q^2 - q + 1)^2(q^2 + q + 1)^2(1-q^{10})}
    \end{split}
\end{equation*}
\begin{equation*}
    \begin{split}
        &h\left(aekafhceg\right)=\\
        &\frac{q^4(q^{10} - q^6 + 2q^4 - q^2 + 1)(1-q^2)}{(q^2 + 1)^2(q^4 + 1)^2(q^2 - q + 1)^2(q^2 + q + 1)^2(1-q^{10})}\\
        \\
        &h\left(aekbdkbfg\right)=h\left(aekbdkcdh\right)=\\
        &\frac{q^3(q^{16} - 2q^{14} + 2q^{12} - q^{10} - 2q^8 + 3q^6 - 3q^4 + q^2 - 1)(1-q^2)}{(q^2 + 1)^2(q^4 + 1)^2(q^2 - q + 1)^2(q^2 + q + 1)^2(1-q^{10})}\\
        \\
        &h\left(aekbdkceg\right)=\\
        &\frac{q^4(q^{12} - 2q^{10} + 4q^8 - 5q^6 + 4q^4 - 2q^2 + 1)(1-q^2)}{(q^2 + 1)(q^4 + 1)^2(q^2 - q + 1)^2(q^2 + q + 1)^2(1-q^{10})}\\
        \\
        &h\left(aekbfgbfg\right)=h\left(aekcdhcdh\right)=\\
        &\frac{-q^4(q^{10} - 2q^8 + 3q^6 - 3q^4 + q^2 - 1)(1-q^2)}{(q^2 + 1)(q^4 + 1)^2(q^2 - q + 1)^2(q^2 + q + 1)^2(1-q^{10})}\\
        \\
        &h\left(aekbfgcdh\right)=\\
        &\frac{q^4(1-q^2)^2(q^{10} + q^6 + 2q^4 + 1)}{(q^2 + 1)^2(q^4 + 1)^2(q^2 - q + 1)^2(q^2 + q + 1)^2(1-q^{10})}\\
        \\
        &h\left(aekbfgceg\right)=h\left(aekcdhceg\right)=\\
        &\frac{q^5(q^3 - q - 1)(q^3 - q + 1)(1-q^2)}{(q^2 + 1)(q^4 + 1)^2(q^2 - q + 1)^2(q^2 + q + 1)^2(1-q^{10})}\\
        \\
        &h\left(aek(ceg)^2\right)=\\
        &\frac{q^6(q^6 + q^4 + 1)(1-q^2)}{(q^2 + 1)(q^4 + 1)^2(q^2 - q + 1)^2(q^2 + q + 1)^2(1-q^{10})}\\
        \\
        &h\left((afh)^3\right)=h\left((bdk)^3\right)=\\
        &\frac{-q^3(q^8 - q^6 + 3q^4 - q^2 + 1)^2(1-q^2)}{(q^2 + 1)(q^4 + 1)^2(q^2 - q + 1)^2(q^2 + q + 1)^2(1-q^{10})}\\
        \\
        &h\left((afh)^2bdk\right)=h\left(afh(bdk)^2\right)=\\
        &\frac{q^3(q^{16} - 2q^{14} + 2q^{12} - 3q^8 + 4q^6 - 4q^4 + q^2 - 1)(1-q^2)}{(q^2 + 1)^2(q^4 + 1)^2(q^2 - q + 1)^2(q^2 + q + 1)^2(1-q^{10})}\\
        \\
    \end{split}
\end{equation*}
\begin{equation*}
    \begin{split}
        &h\left((afh)^2bfg\right)=h\left((bdk)^2cdh\right)=h\left((afh)^2cdh\right)=h\left((bdk)^2bfg\right)=\\
        &\frac{q^4(q^4 - q^2 + 1)(q^8 - q^6 + 3q^4 - q^2 + 1)(1-q^2)}{(q^2 + 1)(q^4 + 1)^2(q^2 - q + 1)^2(q^2 + q + 1)^2(1-q^{10})}\\
        \\
        &h\left((afh)^2ceg\right)=h\left((bdk)^2ceg\right)=\\
        &\frac{-q^5(q^4 - q^2 + 1)^2(1-q^2)}{(q^2 + 1)(q^4 + 1)^2(q^2 - q + 1)^2(q^2 + q + 1)^2(1-q^{10})}\\
        \\
        &h\left(afhbdkbfg\right)=h\left(afhbdkcdh\right)=\\
        &\frac{q^4(q^{12} - 3q^{10} + 5q^8 - 6q^6 + 5q^4 - 2q^2 + 1)(1-q^2)}{(q^2 + 1)(q^4 + 1)^2(q^2 - q + 1)^2(q^2 + q + 1)^2(1-q^{10})}\\
        \\
        &h\left(afhbdkceg\right)=\\
        &\frac{-q^5(1-q^2)^2(q^{10} + 2q^6 + q^4 + 1)}{(q^2 + 1)^2(q^4 + 1)^2(q^2 - q + 1)^2(q^2 + q + 1)^2(1-q^{10})}\\
        \\
        &h\left(afh(bfg)^2\right)=h\left(bdk(cdh)^2\right)=h\left(afh(cdh)^2\right)=h\left(bdk(bfg)^2\right)=\\
        &\frac{-q^5(q^8 - q^6 + 3q^4 - q^2 + 1)(1-q^2)}{(q^2 + 1)(q^4 + 1)^2(q^2 - q + 1)^2(q^2 + q + 1)^2(1-q^{10})}\\
        \\
        &h\left(afhbfgceg\right)=h\left(bdkcdhceg\right)=\\
        &\frac{q^6(q^4 - q^2 + 1)(1-q^2)}{(q^2 + 1)(q^4 + 1)^2(q^2 - q + 1)^2(q^2 + q + 1)^2(1-q^{10})}\\
        \\
        &h\left(afhcdhceg\right)=h\left(bdkbfgceg\right)=\\
        &\frac{q^6(q^4 - q^2 + 1)(1-q^2)}{(q^2 + 1)(q^4 + 1)^2(q^2 - q + 1)^2(q^2 + q + 1)^2(1-q^{10})}\\
        \\
        &h\left(afh(ceg)^2\right)=h\left(bdk(ceg)^2\right)=h\left(bfgcdhceg\right)=\\
        &\frac{-q^7(1-q^2)}{(q^2 + 1)(q^4 + 1)^2(q^2 - q + 1)^2(q^2 + q + 1)^2(1-q^{10})}\\
        \\
        &h\left((bfg)^3\right)=h\left((cdh)^3\right)=\\
        &\frac{q^6(q^8 - q^6 + 3q^4 - q^2 + 1)(1-q^2)}{(q^2 + 1)(q^4 + 1)^2(q^2 - q + 1)(q^2 + q + 1)(1-q^{10})}\\
        \\
    \end{split}
\end{equation*}
\begin{equation*}
    \begin{split}
        &h\left((bfg)^2cdh\right)=h\left(bfg(cdh)^2\right)=\\
        &\frac{q^6(q^4 - q^2 + 1)(1-q^2)}{(q^2 + 1)(q^4 + 1)^2(q^2 - q + 1)^2(q^2 + q + 1)^2(1-q^{10})}\\
        \\
        &h\left((bfg)^2ceg\right)=h\left((cdh)^2ceg\right)=\\
        &\frac{-q^7(q^4 - q^2 + 1)(1-q^2)}{(q^2 + 1)(q^4 + 1)^2(q^2 - q + 1)(q^2 + q + 1)(1-q^{10})}\\
        \\
        &h\left(bfg(ceg)^2\right)=h\left(cdh(ceg)^2\right)=\\
        &\frac{q^8(1-q^2)}{(q^2 + 1)(q^4 + 1)^2(q^2 - q + 1)(q^2 + q + 1)(1-q^{10})}\\
        \\
        &h\left((ceg)^3\right)=\frac{-q^9(1-q^2)}{(q^2 + 1)(q^4 + 1)^2(1-q^{10})}
    \end{split}
\end{equation*}

\paragraph*{Acknowledgement.}
The author was advised by Jeffrey Kuan from Texas A\&M University who acknowledges the support of
US NSF grant DMS-2000331 and Dr. Michael Brannan from the University of Waterloo. We sincerely thank their help and support during this research.

\bibliographystyle{plain}
\bibliography{reference}
\end{document}